\newtheorem{thm}{Theorem}
\newtheorem{cor}{Corollary}
\newtheorem{exmpl}{Example}
\begin{document}
\title{On the Adversarial Robustness of Robust Estimators}
\author{ Lifeng Lai,~\IEEEmembership{Senior Member,~IEEE} and Erhan Bayraktar \thanks{Lifeng Lai is with Department of Electrical and Computer Engineering, University of California, Davis, CA, 95616. Email: lflai@ucdavis.edu. Erhan Bayraktar is with Department of Mathematics, University of Michigan, Ann Arbor, MI 48104. Email: erhan@umich.edu. The work of L. Lai was supported by the National Science Foundation under grants CCF-17-17943, ECCS-17-11468, CNS-18-24553 and CCF-19-08258. The work of E. Bayraktar was supported in part by the National Science Foundation under grant DMS-1613170 and by the Susan M. Smith Professorship. Copyright (c) 2017 IEEE. Personal use of this material is permitted.  However, permission to use this material for any other purposes must be obtained from the IEEE by sending a request to \textcolor{blue}{pubs-permissions@ieee.org}.}}

\date{\today}
\maketitle
\begin{abstract}
Motivated by recent data analytics applications, we study the adversarial robustness of robust estimators. Instead of assuming that only a fraction of the data points are outliers as considered in the classic robust estimation setup, in this paper, we consider an adversarial setup in which an attacker can observe the whole dataset and can modify all data samples in an adversarial manner so as to maximize the estimation error caused by his attack. We characterize the attacker's optimal attack strategy, and further introduce adversarial influence function (AIF) to quantify an estimator's sensitivity to such adversarial attacks. We provide an approach to characterize AIF for any given robust estimator, and then design optimal estimator that minimizes AIF, which implies it is least sensitive to adversarial attacks and hence is most robust against adversarial attacks. From this characterization, we identify a tradeoff between AIF (i.e., robustness against adversarial attack) and influence function, a quantity used in classic robust estimators to measure robustness against outliers, and design estimators that strike a desirable tradeoff between these two quantities.  
\end{abstract}
\begin{IEEEkeywords}
Robust estimators, adversarial robustness, $M$-estimator, non-convex optimization.
\end{IEEEkeywords}

\section{Introduction}
Robust estimation is a classic topic that addresses the outlier or model uncertainty issues. In the existing setup, a certain percentage of the data points are assumed to be outliers. Various concepts such as influence function (IF), breakdown point, and change of variance etc were developed to quantify the robustness of estimators against the presence of outliers, please see~\cite{Huber:AMS:64,Huber:Book:09,Hampel:Book:86} and references therein for details. \textcolor{black}{Furthermore, computationally efficient robust algorithms for high dimensional problems were developed in many recent work~\cite{Bhatia:NIPS:15, Prasad:JRSSB:19,Diakonikolas:FOCS:16,Diakonikolas:ICML:2017,Balakrishnan:COLT:17}. }

These concepts are very useful for the classic setup where a \emph{fraction} (up to $50\%$) of data points are outliers while the remaining data come from the true distribution. In this paper, motivated by recent interest in data analytics, we address the issue of adversarial robustness. In a typical data analytics setup, a dataset is stored in a database. If an attacker has access to the database, he can modify \emph{all} data points (i.e., up to $100\%$) in an adversarial manner, and hence the existing results on robust statistics are not directly applicable anymore. This scenario also arises in the adversarial example phenomena in deep neural networks that have attracted significant recent research interests~\cite{Szegedy2013IntriguingPO,Goodfellow:ICLR:15,Carlini:ISSP:17}. In the adversarial example in deep neural networks, by making small but carefully chosen changes on the image, the attacker can mislead neural network to make wrong decisions, even though a human will hardly notice changes on the modified image. Certainly, if the attacker can modify all data and no further restrictions on attacker's capability are imposed, then no meaningful estimator can be constructed (this can be viewed as 100$\%$ of the data are modified in the classic setup). In this paper, we investigate the scenario that the total amount of change measured by $\ell_p$ norm is limited, and we will study how these quantities will affect the estimation performance. Towards this goal, we introduce the concept of adversarial influence function (AIF) to quantify how sensitive an estimator is to adversarial attacks. \textcolor{black}{These types of constraints} are reasonable and are motivated by real life examples. For example, in generating adversarial examples in images~\cite{Szegedy2013IntriguingPO}, the total distortion should be limited, otherwise human eyes will be able to detect such changes. \textcolor{black}{Our problem formulation could also potentially be useful for investigating the robustness of machine learning algorithms under various constraint on the norm of the attack vector, see for example \cite{Mei:AAAI:15}.}  

We first focus on the scenario with a given data set. For this scenario, we characterize the optimal attack vector that the attacker, who observes the whole data set, can employ to maximize the change of estimation result. Using this characterization, we can then analyze AIF of any given estimator. This analysis enables us to design estimators that are robust to adversarial attacks. In particular, from the estimator's perspective, one would like to design an estimator that minimizes AIF, which implies that such an estimator is least sensitive to adversarial attacks and hence is most robust against adversarial attacks. We derive universal lower bounds on AIF and characterize the conditions under which an estimator can achieve this lower bound (and hence is most robustness against adversarial attacks). We then illustrate these results for two specific models: location estimators and scale estimators.

With the results in the given sample scenario, we then extend our study to the population scenario, in which we investigate the behavior of AIF as the number of samples increases. For this case, we identify a tradeoff between robustness against adversarial attacks vs robustness against outliers. In particular, we first characterize the optimal estimator that minimizes AIF. However, the estimator that minimizes AIF has a poor performance in term of IF~\cite{Hampel:Book:86,Hampel:PHD:68}, a quantity that measures robustness against outliers. Realizing this fact, we then formulate optimization problems to design estimators that strike a desirable tradeoff between AIF (i.e., robustness against adversarial attack) and IF (i.e., robustness against outliers). Using tools from calculus of variations~\cite{Burger:Book:03,Kot:Book:14}, we are able to exploit the unique structure of our problems and obtain analytical form of the optimal solution. The obtained solution share similar interpretation as classic robust estimators that it will carefully trim data points that are from the distribution tails. However, the detailed form and thresholds are determined by different criteria.

In the above discussion, we mainly focus on a class of widely used robust estimators: $M$-estimator. However, the developed tools and analysis can be extended to analyze other types of robust estimators. In this paper, we will use $L$-estimator as an example to discuss how to extend the analysis to other types of estimators. %For $L$-estimators, we again first analyze AIF for a given data set scenario. After that, we extend the analysis to the population case.

Our paper is related to a growing list of recent work on adversarial machine learning. Here we give several examples on data poisoning attack that is related to our work. For example,~\cite{Pimentel:ISIT:17} considers an adversarial principal component analysis (PCA) problem. Different from many interesting work on robust PCA~\cite{Candes:JACM:11}, in the model considered in ~\cite{Pimentel:ISIT:17}, an attacker adds an extra data point in an adversarial manner so as to maximize the error of subspace estimated by PCA.~\cite{Jagielski:ISSP:18} investigates data poisoning attack in regression problems, in which the attacker adds data points to the training dataset with the goal of introducing errors into or guiding the results of regression models.~\cite{Biggio:ICML:12} studies an attack that inserts carefully chosen data points to the training set for support vector machine.~\cite{Charikar:STOC:17} considers learning problems from untrusted data. In particular, under the assumption that at least $\alpha$ percent of data points are drawn from a distribution of interest,~\cite{Charikar:STOC:17} considers two frameworks: 1) list-decodable learning, whose goal is to return a list of answers, with the guarantee that at least one of them is accurate; and 2) semi-verified learning, in which one has a small dataset of trusted data that can be leveraged to enable the accurate extraction of information from a much larger but untrusted dataset. \textcolor{black}{There are also a large number of recent work on robust estimators in high dimensions~\cite{Bhatia:NIPS:15, Prasad:JRSSB:19,Diakonikolas:FOCS:16,Diakonikolas:ICML:2017,Balakrishnan:COLT:17}. These papers focus on designing computationally efficient algorithms in the high dimension region.} A major difference between our work and these interesting work is that the existing work assume that a certain percentage of data points are not compromised, while in our work all data points could be compromised. \textcolor{black}{Our paper is also related to a recent interesting paper \cite{Suggala:COLT:19} that studies the problem of robust linear regression with response variable corruptions. \cite{Suggala:COLT:19} considers an oblivious adversary model, in which the adversary changes a fraction of responses without knowledge of the data, and provides a nearly linear time estimator that is consistent even when the majority of the data is corrupted. In our paper, the modification introduced by the attack can be dependent on the whole dataset. } 

The remainder of the paper is organized as follows. In Section~\ref{sec:intro}, we introduce our problem formulation. In Section~\ref{sec:background}, we introduce the necessary background. In Section~\ref{sec:M}, we investigate AIF for the given sample scenario. In Section~\ref{sec:pop}, we consider the population scenario. We extend the study to $L$-estimator in Section~\ref{sec:L}. Numerical examples are given in Section~\ref{sec:example}. Finally, we offer concluding remarks in Section~\ref{sec:con}.

%\textcolor{red}{Future, testing.}
\section{Model}~\label{sec:intro}

%In this section, we first introduce our problem formulation. We will then briefly review results from classic robust estimation that are directly related to our study.

%\subsection{Problem formulation}

We consider an adversarially robust parameter estimation problem in which the adversary has access to the whole dataset. In particular, we have a given data set $\mathbf{x}=\{x_1,\cdots,x_N\}$, in which $x_n$ are i.i.d realizations of random variable $X\in \mathbb{R}$ that has cumulative density function (cdf) $F_{\theta}(x)$ with unknown parameter $\theta\in \mathbb{R}$. We will use $f_{\theta}(x)$ to denote the corresponding probability density function (pdf). From this given data set, we would like to estimate the unknown parameter $\theta$. However, as the adversary has access to the whole dataset, it will modify the data to $\mathbf{x}^{\Delta}=\mathbf{x}+\Delta \mathbf{x}:=\{x_1+\Delta x_1, \cdots,x_N+\Delta x_N\}$, in which $\Delta \mathbf{x}=\{\Delta x_1, \cdots,\Delta x_N\}$ is the attack vector chosen by the adversary after observing $\mathbf{x}$. We will discuss the attacker's optimal attack strategy in choosing $\Delta \mathbf{x}$ in the sequel. In the classic robust estimation setup, it is typically assume that some percentage (up to $50\%$) of the data points are outliers, that is some entries in $\Delta \mathbf{x}$ are nonzero while the remainders are zero. In this work, we consider the case where the attacker can modify all data points, which is a more suitable setup for recent data analytical applications. However, certain restrictions need to be put on $\Delta \mathbf{x}$, otherwise the estimation problem will not be meaningful. In this paper, we assume that 
\begin{eqnarray}
\frac{1}{N}||\Delta \mathbf{x}||_p^p\leq \eta^p,\label{eq:norconstranit}
\end{eqnarray}
in which $||\cdot||_p$ is the $\ell_p$ norm. The normalization factor $N$ implies that the per-dimension change (on average) is upper-bound by $\eta^p$. As mentioned in the introduction, this type of constraints are reasonable and are motivated by real life examples. %For example, in generating adversary examples in images, the total distortion should be limited, otherwise human eyes will be able to detect such changes. 
The classic setup can be viewed as a special case of our formulation by letting $p\rightarrow 0$, i.e., the classic setup has constraint on the total number of data points that the attacker can modify.

Following notation used in robust statistics \cite{Huber:Book:09,Hampel:Book:86}, we will use $T_N(\mathbf{x})$ to denote an estimator. For a given estimator $T_N$, we would like to characterize how sensitive the estimator is with respect to the adversarial attack. In this paper, we consider a scenario where the goal of the attacker is to maximize the \textcolor{black}{deviation in the estimator's output caused by the attack}. In particular, the attacker aims to choose $\Delta\mathbf{x}$ by solving the following optimization problem
\begin{eqnarray}\label{eq:optimalintro}
\max\limits_{\Delta\mathbf{x}} && |T_N(\mathbf{x}+\Delta\mathbf{x})-T_N(\mathbf{x})|,\\
\text{s.t.}&& \frac{1}{N}||\Delta \mathbf{x}||_p^p\leq \eta^p.\nonumber
\end{eqnarray}

We use $\Delta T_N(\mathbf{x})$ to denote the optimal value obtained from the optimization problem~\eqref{eq:optimalintro}, and define the adversarial influence function (AIF) of estimator $T_N$ at $\mathbf{x}$ under $\ell_p$ norm constraint as
\begin{eqnarray}
\text{AIF}(T_N,\mathbf{x},p)=\lim_{\eta\downarrow 0}\frac{\Delta T_N(\mathbf{x})}{\eta}.\nonumber
\end{eqnarray}
This quantity, a generalization of the concept of IF used in classic robust estimation (we will briefly review IF in Section~\ref{sec:background}), quantifies the rate at which the attacker can introduce estimation error through its attack. 

From the defender's perspective, the smaller AIF is, the more robust the estimator is. In this paper, building on the characterization of $\text{AIF}(T_N,\mathbf{x},p)$, we will characterize the optimal estimator $T_N$, among a certain class of estimators $\mathcal{T}$, that minimizes $\text{AIF}(T_N,\mathbf{x},p)$. In particular, we will investigate 
\begin{eqnarray}
\min\limits_{T_N\in \mathcal{T}} \text{AIF}(T_N,\mathbf{x},p).\nonumber
\end{eqnarray}
We will show that, for certain class of $\mathcal{T}$, the optimal $T_N$ is independent of $\mathbf{x}$ and $p$, which is a very desirable property.

Note that $\text{AIF}(T_N,\mathbf{x},p)$ depends on the data realization $\mathbf{x}$. Based on the characterization of AIF for a given data realization $\mathbf{x}$ of length $N$, we will then study the population version of AIF where each entry of $\mathbf{X}=\{X_1,\cdots,X_N\}$ is i.i.d generated by $F_{\theta}$. We will examine the behavior of $\text{AIF}(T_N,\mathbf{X},p)$ as $N$ increases. Following the convention in robust statistics, we will assume that there exists a functional $T$ such that 
\begin{eqnarray}
T_N(\mathbf{X})\rightarrow T(F_{\theta})\label{eq:functional}
\end{eqnarray}
in probability as $N\rightarrow \infty$.
We will see that for a large class of estimators $\text{AIF}(T_N,\mathbf{X},p)$ has a well-defined limit as $N\rightarrow \infty$. We will use $\text{AIF}(T,F_{\theta},p)$ to denote this limit when it exists.

%\textcolor{black}{Add discussion about $T_N$ being functional and converges to $T(F)$}
%For certain class of estimators, we are also interested in charactering $T_N$ that minimizes $\text{AIF}(T_N,\mathbf{x},p)$. We will show that this $T_N$ is independent of $\mathbf{x}$ and $p$, which is a very desirable property.

Similarly, from the defense's perspective, we would like to design an estimator that is least sensitive to the adversarial attack. Again, we will characterize the optimal estimator $T$, among a certain class of estimators $\mathcal{T}$, that minimizes $\text{AIF}(T,F_{\theta},p)$. That is, for a certain class of estimators $\mathcal{T}$, we will solve
\begin{eqnarray}
\min\limits_{T\in \mathcal{T}}  \text{AIF}(T,F_{\theta},p).\label{eq:AIF}
\end{eqnarray}

It will be clear in the sequel that the solution to the optimization problem~\eqref{eq:AIF}, even though is robust against adversarial attacks, has poor performance in guarding against outliers. This motivates us to design estimators that strike a desirable tradeoff between these two robustness measures. In particular, we will solve~\eqref{eq:AIF} with an additional constraint on IF. We will need to use tools from calculus of variations for this purpose.

\textcolor{black}{We note that in this paper, we focus on the scalar case (i.e., $X$ and $\theta$ are scalers). The problem formulation and analysis can be extended (with additional technical developments) to the more general vector case (including joint location-scale estimation and robust regression etc.). The corresponding results are reported in~\cite{Bayraktar:SMDS:19}.}
\section{Background}\label{sec:background}
In this section, we briefly review results from classic robust estimator literature that are closely related to our study.% These results will be used for comparison in future sections.

\subsection{Influence Function (IF)}
As mentioned above, in the classic robust estimation setup, it is assumed that a fraction $\eta$ of data points are outliers, while the remainder of data points are generated from the true distribution $F_{\theta}$. For a given estimator $T$, the concept of IF introduced by Hampel~\cite{Hampel:PHD:68} is defined
\begin{eqnarray}
\text{IF}(x,T,F_{\theta})=\lim_{\eta\downarrow 0}\frac{T((1-\eta)F_{\theta}+\eta \delta_{x})-T(F_{\theta})}{\eta}.\nonumber
\end{eqnarray}
In this definition, $\delta_{x}$ is a distribution that puts mass 1 at point $x$, $T(F_{\theta})$, introduced in~\eqref{eq:functional}, is the obtained estimate when all data points are generated i.i.d from $F_{\theta}$, and $T((1-\eta)F_{\theta}+\eta \delta_{x})$ is the obtained estimate when $1-\eta$ fraction of data points are generated i.i.d from $F_{\theta}$ while $\eta$ fraction of the data points are at $x$. Hence, $\text{IF}(x,T,F_{\theta})$ measures the influence of having outliers at point $x$ as $\eta\downarrow 0$.  

To measure the influence of the worst outliers,~\cite{Hampel:PHD:68} then further introduced the concept of gross-error sensitivity of $T$ by taking $\sup$ over the absolute value of $\text{IF}(x,T,F_{\theta})$:
\begin{eqnarray}
\gamma^*(T,F_{\theta})=\sup_{x}|\text{IF}(x,T,F_{\theta})|.\nonumber
\end{eqnarray}

Intuitively speaking, $\gamma^*(T,F_{\theta})$ can be viewed as the solution of our problem setup for the special case of $p=0$.

The values of $\text{IF}(x,T,F_{\theta})$ and $\gamma^*(T,F_{\theta})$ have been characterized for various class of estimators. Furthermore, under certain conditions, optimal estimator $T$ that minimizes these quantities have been established. Some of these results will be introduced in later sections. More details can be found in~\cite{Huber:Book:09,Hampel:Book:86}.

\subsection{$M$-Estimator}

In this paper, we will mainly focus on a class of commonly used estimator in robust statistic: $M$-estimator~\cite{Huber:AMS:64}, in which one obtains an estimate $T_N(\mathbf{x})$ of $\theta$ by solving 
\begin{eqnarray}\label{eq:Mestimate}
\sum\limits_{n=1}^N \psi(x_n,T_N)=0.
\end{eqnarray}

Here $\psi(x_n,\theta)$ is a function of data $x_n$ and parameter $\theta$ to be estimated. Different choices of $\psi$ lead to different robust estimators. For example, the most likely estimator (MLE) can be obtained by setting $\psi=-f_{\theta}^{'}/f_{\theta}$. \textcolor{black}{$M$-estimator can also be defined as the solution of an optimization problem. The formulation in~\eqref{eq:Mestimate} and the optimization formulation have certain relationship, but they are not always equivalent. Please refer to Chapter 2.3a of~\cite{Hampel:Book:86} for detailed discussion. }

As the form of $\psi$ determines $T_N$, in the remainder of the paper, we will use $\psi$ and $T_N$ interchangeably. For example, we will denote $\text{AIF}(T_N,\mathbf{x},p)$ as $\text{AIF}(\psi,\mathbf{x},p)$. Similarly, we will denote $\text{IF}(x,T,F_{\theta})$ as $\text{IF}(x,\psi,F_{\theta})$.

It is typically assumed that $\psi(x,\theta)$ is continuous and almost everywhere differentiable. This assumption is valid for all $\psi$'s that are commonly used. It is also typically assume the estimator is Fisher consistent~\cite{Hampel:Book:86}:
\begin{eqnarray}
\mathbb{E}_{F_\theta}[\psi(X,\theta)]=0,\label{eq:Fisher}
\end{eqnarray}
in which $\mathbb{E}_{F_\theta}$ means expectation under $F_{\theta}$. Intuitively speaking, this implies that the true parameter $\theta$ is the solution of the $M$-estimator if there are increasingly more i.i.d. data points generated from $F_{\theta}$.

For $M$-estimator, $\text{IF}(x,\psi, F_{\theta})$ was shown to be  
\begin{eqnarray}
\text{IF}(x,\psi, F_{\theta})=\frac{\psi(x,T(F_{\theta}))}{-\int \frac{\partial}{\partial \theta}[\psi(y,\theta)]_{\theta=T(F_{\theta})}\text{d}F_{\theta}(y)},\nonumber
\end{eqnarray}
see (2.3.5) of~\cite{Hampel:Book:86}.
%This equation holds regardless whether the estimator is Fisher consistent or not, and hence in the expression, $T(F_{\theta})$ denotes the estimation obtained using $\psi$ when the data are i.i.d. under $F_{\theta}$.

\section{The Fixed Sample Case}\label{sec:M}
%In this section, we consider a class of commonly used estimator in robust statistic: $M$-estimator~\cite{Huber:AMS:64}, in which one obtain an estimate of $\theta$ by solving 
%\begin{eqnarray}\label{eq:Mestimate}
%\sum\limits_{n=1}^N \psi(x_n,\theta)=0.
%\end{eqnarray}
%In other words, for $M$-estimators, $T_N(\mathbf{x})$ satisfies
%\begin{eqnarray}
%\sum\limits_{n=1}^N \psi(x_n,T_N)=0.
%\end{eqnarray}

%Here $\psi(x_n,\theta)$ is a function of data $x_n$ and parameter $\theta$ to be estimated. Different choices of $\psi$ define different robust estimators. For example, the most likely estimator (MLE) can be obtained by setting $\psi=-f_{\theta}^{'}/f_{\theta}$. The popular Huber estimator~\cite{Huber:AMS:64} is obtained by setting $\psi(x)=\min\{b,\max\{x,-b\}\}$ for a parameter $0<b<\infty$. 
%We will assume $\psi(x,\theta)$ is continuous and almost everywhere differentiable. This assumption is valid for all $\psi$'s that are commonly used.

%As the form of $\psi$ determines $T_N$, in the remainder of this section, we will denote $\text{AIF}(T_N,\mathbf{x},p)$ as $\text{AIF}(\psi,\mathbf{x},p)$.
%As the estimator is determined by $\psi$, we use $\psi$ in place of $T_N$ in this section,   For any given $\psi$, we use $\hat{\theta}_{\psi}(\mathbf x)$ to denote the estimate obtained from the original data $\mathbf{x}$ by solving~\eqref{eq:Mestimate}, and $\hat{\theta}_{\psi}(\mathbf{x}+\Delta \mathbf{x})$ to denote the estimate obtained from the modified data $\mathbf{x}+\Delta \mathbf{x}$ by solving
%\begin{eqnarray}
%\sum\limits_{n=1}^N \psi(x_n+\Delta x_n,\theta)=0.
%\end{eqnarray}
In this section, we focus on analyzing $\text{AIF}(\psi, \mathbf{x},p)$ for a given dataset $\mathbf{x}$. We will extend the study to the population case and analyze $\text{AIF}(\psi, F_{\theta},p)$ in Section~\ref{sec:pop}.

\subsection{General $\psi$}

We will first characterize $\text{AIF}(\psi, \mathbf{x},p)$ for general $\psi$, and will then specialize the results to specific problems in later sections. For any given $\psi$ that is continuous and almost everywhere differentiable, we have the following theorem that characterizes $\text{AIF}(\psi,\mathbf{x},p)$.
\begin{thm}\label{thm:asc}
	
	When $p=1$,
	\begin{eqnarray}
	\text{AIF}(\psi,\mathbf{x},1)=\frac{\left|\frac{\partial }{\partial x}[\psi]_{x=x_{n^*}, \theta=T_N}\right|}{\left|\frac{1}{N}\sum\limits_{n=1}^{N}\frac{\partial }{\partial \theta}[\psi]_{x=x_n,\theta=T_N}\right|},\nonumber
	\end{eqnarray}
	where \begin{eqnarray} 
	n^*=\arg\max\limits_{n} \left |\frac{\partial }{\partial x}[\psi]_{x=x_n, \theta=T_N}\right|.\label{eq:nstar}
	\end{eqnarray}
	
	For $p>1$, we have
	\begin{eqnarray}
	\text{AIF}(\psi,\mathbf{x},p)=\frac{\left(\frac{1}{N}\sum\limits_{n=1}^N\left|\frac{\partial }{\partial x}[\psi]_{x=x_n,\theta=T_N}\right|^{\frac{p}{p-1}}\right)^{\frac{p-1}{p}}}{\left|\frac{1}{N}\sum\limits_{n=1}^N \frac{\partial}{\partial \theta}[\psi]_{x=x_n,\theta=T_N}\right|}.\nonumber
	\end{eqnarray}
	
\end{thm}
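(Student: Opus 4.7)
The plan is to reduce the problem to a constrained optimization of a linear form in $\Delta\mathbf{x}$ through a first-order expansion of the defining equation $\sum_{n=1}^N \psi(x_n+\Delta x_n, T_N(\mathbf{x}+\Delta\mathbf{x}))=0$, then apply the appropriate Hölder-type inequality.

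First, I would write $T_N' := T_N(\mathbf{x}+\Delta\mathbf{x})$ and use the implicit function theorem (guaranteed by the standing assumption that $\psi$ is almost everywhere differentiable, together with the usual non-degeneracy $\sum_n \partial_\theta \psi(x_n,T_N)\neq 0$ which is needed even to have $T_N$ defined as a solution) to expand
\begin{equation*}
0=\sum_{n=1}^N \psi(x_n+\Delta x_n, T_N') = \sum_{n=1}^N \psi(x_n,T_N) + \sum_{n=1}^N \partial_x\psi(x_n,T_N)\Delta x_n + \Big(\sum_{n=1}^N \partial_\theta\psi(x_n,T_N)\Big)(T_N'-T_N) + o(\|\Delta\mathbf{x}\|+|T_N'-T_N|).
\end{equation*}
Since the first sum vanishes by the definition of $T_N$, solving for $T_N'-T_N$ and taking absolute values gives
\begin{equation*}
|T_N'-T_N| = \frac{\bigl|\sum_n \partial_x\psi(x_n,T_N)\,\Delta x_n\bigr|}{\bigl|\sum_n \partial_\theta\psi(x_n,T_N)\bigr|} + o(\eta),
\end{equation*}
uniformly over $\Delta\mathbf{x}$ in the constraint ball (this uniformity is the technical point to verify: one needs the perturbation $T_N'-T_N$ to be $O(\eta)$, which follows from a standard fixed-point/inverse-function argument).

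Next, set $a_n := \partial_x\psi(x_n,T_N)$ and $b_n := \Delta x_n/\eta$. The attack budget becomes $\tfrac{1}{N}\sum_n |b_n|^p \le 1$, and we must compute
\begin{equation*}
\sup_{\frac{1}{N}\sum_n |b_n|^p\le 1} \Bigl|\sum_{n=1}^N a_n b_n\Bigr|.
\end{equation*}
For $p>1$, Hölder's inequality with conjugate exponent $q=p/(p-1)$ yields $|\sum_n a_n b_n|\le (\sum_n |a_n|^q)^{1/q}(\sum_n |b_n|^p)^{1/p} \le N^{1/q}\bigl(\tfrac{1}{N}\sum_n|a_n|^q\bigr)^{1/q}\cdot N^{1/p}$, with equality attained by the usual choice $b_n \propto \operatorname{sgn}(a_n)|a_n|^{q-1}$ (rescaled to saturate the budget). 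Since $N^{1/p+1/q}=N$ absorbs the $N$ in the denominator $\sum_n \partial_\theta\psi = N\cdot \tfrac{1}{N}\sum_n\partial_\theta\psi$, dividing through and taking $\eta\downarrow 0$ delivers the claimed formula for $p>1$. For $p=1$, the dual norm is $\ell_\infty$, so $|\sum_n a_n b_n|\le (\max_n |a_n|)\sum_n |b_n|\le N\max_n|a_n|$, with equality achieved by concentrating all mass on the index $n^*$ from \eqref{eq:nstar} with the correct sign; the same algebraic cancellation yields the stated $p=1$ formula.

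The main obstacle is justifying the passage to the limit rigorously: one needs (i) that for each sufficiently small $\eta$ the perturbed estimating equation has a solution $T_N'$ close to $T_N$, and (ii) that the $o(\eta)$ remainder in the Taylor expansion is uniform over the constraint set so that the supremum of the linearized quantity really equals $\eta\cdot\text{AIF}+o(\eta)$. Both follow from standard implicit function arguments under the differentiability hypothesis, together with compactness of the constraint set; once these are in place, the Hölder computation above is the content of the theorem.
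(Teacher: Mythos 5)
Your proposal is correct and takes essentially the same route as the paper: linearize the estimating equation $\sum_n\psi(x_n,T_N)=0$ (the paper does this via implicit differentiation to get $\partial T_N/\partial x_n$ plus a Taylor expansion), then maximize the resulting linear form in $\Delta\mathbf{x}$ over the $\ell_p$ ball. The only difference is that you evaluate that maximum by H\"older's inequality and its equality case (dual norm), whereas the paper derives the same extremizer $\Delta\mathbf{x}^*$ by forming the Lagrangian and solving the KKT conditions; the uniformity of the $o(\eta)$ remainder that you flag is treated just as informally in the paper ("higher order terms").
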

\begin{proof}
	Please see Appendix~\ref{app:asc} for detailed proof.
\end{proof}

\textcolor{black}{In this theorem, we characterize the result for $p\geq 1$. Ideally, one would like to consider the case with $p<1$, but this will result in a non-convex optimization, which precludes us from obtaining a closed form solution. }

%For example, when $p=2$, we have
%\begin{eqnarray}
%\Delta x_{n}^*=\frac{\sqrt{N}\eta}{\sqrt{\sum\limits_{n=1}^N \left( \frac{\partial }{\partial x_{n}}T_N\right)^2}} \frac{\partial }{\partial x_{n}}T_N,\nonumber
%\end{eqnarray}
%and
%\begin{eqnarray}
%\text{AIF}(\psi,\mathbf{x},2)=\frac{\sqrt{N}\sum\limits_{n=1}^N\left( \frac{\partial }{\partial x_{n}}T_N\right)^2}{\sqrt{\sum\limits_{n=1}^N \left( \frac{\partial }{\partial x_{n}}T_N\right)^2}}=\frac{\sqrt{\frac{1}{N}\sum\limits_{n=1}^N\left(\frac{\partial }{\partial x}[\psi]_{x=x_n, \theta=T_N}\right)^2}}{\left|\frac{1}{N}\sum\limits_{n=1}^{N}\frac{\partial }{\partial \theta}[\psi]_{x=x_n,\theta=T_N}\right|}.\nonumber
%\end{eqnarray}

From Theorem~\ref{thm:asc}, we can characterize the form of $\psi$ that leads to the smallest $\text{AIF}$, i.e., the most robust $M$-estimator against adversarial attacks.
\begin{cor}
	\begin{eqnarray}
	\text{AIF}(\psi,\mathbf{x},p)\geq\frac{\frac{1}{N}\sum\limits_{n=1}^N\left|\frac{\partial }{\partial x}[\psi]_{x=x_n,\theta=T_N}\right|}{\left|\frac{1}{N}\sum\limits_{n=1}^N \frac{\partial}{\partial \theta}[\psi]_{x=x_n,\theta=T_N}\right|},\nonumber
	\end{eqnarray}	
	and the equality holds when 
	\begin{eqnarray}
	\left|\frac{\partial }{\partial x}[\psi]_{x=x_1,\theta=T_N}\right|=\cdots=\left|\frac{\partial }{\partial x}[\psi]_{x=x_N,\theta=T_N}\right|.\nonumber
	\end{eqnarray}
\end{cor}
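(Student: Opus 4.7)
The strategy is to observe that both sides of the claimed inequality share the same denominator $\left|\frac{1}{N}\sum_{n}\frac{\partial}{\partial\theta}[\psi]_{x=x_n,\theta=T_N}\right|$ coming from Theorem~\ref{thm:asc}, so the statement reduces to comparing two different means of the nonnegative sequence
$$a_n:=\left|\frac{\partial}{\partial x}[\psi]_{x=x_n,\theta=T_N}\right|,\qquad n=1,\dots,N.$$
The corollary will then fall out of a standard power-mean comparison.

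First I would handle $p=1$. Theorem~\ref{thm:asc} identifies the numerator of $\text{AIF}(\psi,\mathbf{x},1)$ with $a_{n^*}=\max_n a_n$, and the trivial bound $\max_n a_n\geq \frac{1}{N}\sum_n a_n$ gives the inequality immediately. Equality forces all the $a_n$ to coincide, which is the stated equality condition.

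For $p>1$, set $q:=p/(p-1)\in(1,\infty)$. Theorem~\ref{thm:asc} identifies the numerator with the power mean $M_q=\bigl(\frac{1}{N}\sum_n a_n^q\bigr)^{1/q}$, and what remains is to show $M_q\geq \frac{1}{N}\sum_n a_n$. This is the classical power-mean inequality for $q\geq 1$; the cleanest route is to apply Jensen's inequality to the strictly convex function $t\mapsto t^q$ on $[0,\infty)$, yielding $\frac{1}{N}\sum_n a_n^q\geq \bigl(\frac{1}{N}\sum_n a_n\bigr)^q$, and then take a $(1/q)$-th power. Strict convexity of $t\mapsto t^q$ when $q>1$ ensures that equality in Jensen holds iff all $a_n$ are equal, matching the condition already obtained in the $p=1$ case. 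There is no substantive obstacle here: once the two numerators in Theorem~\ref{thm:asc} are recognized as non-arithmetic means dominating the arithmetic mean of the $a_n$'s, the corollary is essentially a one-line application of a standard mean inequality.
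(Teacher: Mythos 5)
Your proposal is correct and follows essentially the same route as the paper: both sides share the denominator from Theorem~\ref{thm:asc}, and the numerator comparison is exactly the Jensen/power-mean inequality, which the paper states via concavity of $t\mapsto t^{(p-1)/p}$ while you equivalently use convexity of $t\mapsto t^{p/(p-1)}$. Your explicit treatment of the $p=1$ case via $\max_n a_n\geq\frac{1}{N}\sum_n a_n$ is a small addition the paper leaves implicit, but it does not change the argument.
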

\begin{proof}
	For $p>1$, it is easy to check that $x^{(p-1)/p}$ is a concave function when $x\geq 0$. Hence, using Jensen's inequality, we have
	\begin{eqnarray}
	&&\left(\frac{1}{N}\sum\limits_{n=1}^N\left|\frac{\partial }{\partial x}[\psi]_{x=x_n,\theta=T_N}\right|^{\frac{p}{p-1}}\right)^{\frac{p-1}{p}}\nonumber\\
	&&\hspace{5mm}\geq \frac{1}{N}\sum\limits_{n=1}^N\left|\frac{\partial }{\partial x}[\psi]_{x=x_n,\theta=T_N}\right|,\nonumber
	\end{eqnarray}
	and the equality holds when $\left|\frac{\partial }{\partial x}[\psi]_{x=x_n,\theta=T_N}\right|$ is a constant with respect to $n$.
\end{proof}

This corollary implies that, from defender's perspective, we should design $\psi(x,\theta)$ such that $\left|\frac{\partial}{\partial x}[\psi]\right|$ is constant in $x$. It is also interesting that, this result holds for any value of $p$. And hence we can design an estimator without knowledge about which constraint the attacker is using. 

\subsection{Specific Estimators}
To illustrate the results obtained above, we specialize results to location estimators and scale estimators.
\subsubsection{Location Estimator}

%\subsubsection{Location Estimator}
For location estimator models, $F_{\theta}(x)=F_{0}(x-\theta)$, and hence it is natural to use $\psi(x,\theta)=\psi(x-\theta)$, see~\cite{Huber:Book:09,Hampel:Book:86}. For this model, it is easy to check that
\begin{eqnarray}
\frac{\partial }{\partial x}[\psi]_{x=x_n, \theta=T_N}=\psi^{'}(x_n-T_N),\nonumber\\
\frac{\partial }{\partial \theta}[\psi]_{x=x_n, \theta=T_N}=-\psi^{'}(x_n-T_N).\nonumber
\end{eqnarray}
Plugging these two equations in the AIF expressions in Theorem~\ref{thm:asc}, for the case with $p=1$, we have 
\begin{eqnarray}
\text{AIF}(\psi,\mathbf{x},1)=\frac{\left|N\psi^{'}(x_{n^*}-T_N)\right|}{\left|\sum\limits_{n=1}^{N}\psi^{'}(x_n-T_N)\right|}\geq 1,\nonumber
\end{eqnarray}
for which the equality holds when $\psi^{'}(x_n-T_N)$ is a constant with respect to $n$. 

For the case with $p>1$, we have
\begin{eqnarray}
\text{AIF}(\psi,\mathbf{x},p)&=&\frac{\left(\frac{1}{N}\sum\limits_{n=1}^N\left|\psi^{'}(x_n-T_N)\right|^{\frac{p}{p-1}}\right)^{\frac{p-1}{p}}}{\left|\frac{1}{N}\sum\limits_{n=1}^N\psi^{'}(x_n-T_N)\right|}\label{eq:AIFp}\\
&\overset{(a)}{\geq}&\frac{\frac{1}{N}\sum\limits_{n=1}^N\left|\psi^{'}(x_n-T_N)\right|}{\left|\frac{1}{N}\sum\limits_{n=1}^N\psi^{'}(x_n-T_N)\right|}\geq 1,\nonumber
\end{eqnarray}
in which (a) is due to Jensen's inequality. Both inequalities will hold if $\psi^{'}(x_n-T_N)$ is a constant in $n$. %From the defender's perspective, we would like to minimize $\text{AIF}(\psi,\mathbf{x},p)$, i.e., to achieve the lower bound. 

\begin{exmpl}
	Consider an estimator with $\psi(x_n-T_N)=x_n-T_N$. This estimator is simply the empirical sample mean. It is easy to see that $\psi^{'}(x)$ is a constant in $n$, which implies that this choice of $\psi$ has $\text{AIF}(\psi,\mathbf{x},p)=1$. It achieves the lower bound established above, regardless of the value of $\mathbf{x}$ and $p$. Hence, it is the most robust estimator against adversarial attacks. However, as we will discuss in Section~\ref{sec:pop}, this choice of $\psi$ is not robust against outliers. In Section~\ref{sec:pop}, we will design estimators that strike a desirable balance between robustness against outliers and robustness against adversarial attacks.
\end{exmpl} 

\begin{exmpl}
	Consider the Huber estimator~\cite{Huber:AMS:64} with $$\psi(x_n-T_N)=\min\{b,\max \{x_n-T_N,-b\}\},$$ parameterized by a parameter $0<b<\infty$. Using~\eqref{eq:AIFp}, it is easy to check that $\text{AIF}(\psi,\mathbf{x},p)=\sqrt{1/\beta}$, in which $\beta$ is the \textcolor{black}{proportion of points} in $\mathbf{x}$ such that $|x_n-T_N|< b$. It is clear that Huber estimator, while being more robust against outliers~\cite{Huber:Book:09}, is less robust against adversarial attacks than the empirical mean estimator.
\end{exmpl}

\subsubsection{Scale Estimator}
The scale model~\cite{Huber:Book:09,Hampel:Book:86} is given by $F_{\theta}(x)=F_1(x/\theta)$, and it is typical to consider
$\psi(x,\theta)=\psi(x/\theta).$ It is easy to check that for $\psi$ with this form, we have
\begin{eqnarray}
\frac{\partial }{\partial x}[\psi]_{x=x_n, \theta=T_N}=\frac{\psi^{'}(x_n/T_N)}{T_N},\nonumber\\
\frac{\partial }{\partial \theta}[\psi]_{x=x_n, \theta=T_N}=\frac{-x_n\psi^{'}(x_n/T_N)}{T_N^2}.\nonumber
\end{eqnarray}

Using Theorem~\ref{thm:asc}, for the case with $p=1$, we obtain
\begin{eqnarray}
\hspace{4mm}\text{AIF}(\psi,\mathbf{x},1)&=&\frac{\left|N\frac{\psi^{'}(x_{n^*}/T_N)}{T_N}\right|}{\left|\sum\limits_{n=1}^{N}\frac{-x_n\psi^{'}(x_n/T_N)}{T_N^2}\right|}\nonumber\\&=& \frac{\left|\psi^{'}(x_{n^*}/T_N)\right|}{\left|\frac{1}{N}\sum\limits_{n=1}^{N}x_n/T_N\psi^{'}(x_n/T_N)\right|}\label{eq:scale1},
\end{eqnarray}
in which $n^*$ is defined in~\eqref{eq:nstar}.

When $p>1$, we have
\begin{eqnarray}
\text{AIF}(\psi,\mathbf{x},p)&=&\frac{\left(\frac{1}{N}\sum\limits_{n=1}^N\left|\frac{\psi^{'}(x_n/T_N)}{T_N}\right|^{\frac{p}{p-1}}\right)^{\frac{p-1}{p}}}{\left|\frac{1}{N}\sum\limits_{n=1}^N \frac{-x_n\psi^{'}(x_n/T_N)}{T_N^2}\right|}\nonumber \\&=&\frac{\left(\frac{1}{N}\sum\limits_{n=1}^N\left|\psi^{'}(x_n/T_N)\right|^{\frac{p}{p-1}}\right)^{\frac{p-1}{p}}}{\left|\frac{1}{N}\sum\limits_{n=1}^N x_n/T_N\psi^{'}(x_n/T_N)\right|}.\label{eq:scalep}
\end{eqnarray}

\begin{exmpl}
	Consider MLE for variance of zero-mean Gaussian random variables, which corresponds to $\psi(x)=-x(\phi^{'}(x)/\phi(x))-1=x^2-1$. Here, $\phi(x)$ is the pdf of zero mean variance one Gaussian random variable. For this choice of $\psi$, we have $T_N=\frac{1}{N}\sum_{n=1}^{N}x_n^2$ and $\psi^{'}(x_n/T_N)=2x_n/T_N$. Plugging these values into~\eqref{eq:scale1}, we obtain
	\begin{eqnarray}
	\text{AIF}(\psi,\mathbf{x},1)=|x_n^*|.\nonumber
	\end{eqnarray}
	Using~\eqref{eq:scalep}, we have
	\begin{eqnarray}
	\text{AIF}(\psi,\mathbf{x},p)=\left(\frac{1}{N}\sum_{n=1}^N|x_n|^{\frac{p}{p-1}}\right)^{\frac{p-1}{p}},\nonumber
	\end{eqnarray}
	from which we know that when $p=2$, $\text{AIF}(\psi,\mathbf{x},2)=\sqrt{T_N}=\sqrt{\frac{1}{N}\sum_{n=1}^{N}x_n^2}.$
	
\end{exmpl} 

\section{Population Case}\label{sec:pop}
With the results on the fixed dataset case, we now consider the population version where $X_n$ are i.i.d from $F_{\theta}$, and analyze the behavior of AIF as $N\rightarrow\infty$. Following the convention in classic \textcolor{black}{robust statistics literature}, we will focus on the case in which the estimator is Fisher consistent as defined in~\eqref{eq:Fisher}.

\color{black}
It has been shown in Theorem 2.4 of~\cite{Huber:Book:09} that, under certain mild regularity conditions, $T_N\overset{a.s.}{\rightarrow}\theta$. In the following, we will need the following additional regularity conditions:
\begin{itemize}
	\item $\psi^{'}(x)$ is continuous functions. 
	\item There exist a function $K(x)$ such that $|\psi^{'}(x)|\leq K(x)$, $|x\psi_1^{'}(x)|\leq K(x)$, and $\mathbb{E}_{F_{\theta}}[K(X)]<\infty$.
\end{itemize} 
The conditions here are slightly stronger than those conditions needed for the strong law of large numbers, as we will need to use the uniform strong law of large numbers (see Theorem 16 (a)~\cite{Ferguson:Book:96}). Under these regularity assumptions, using the uniform strong law of large numbers, Slutsky Theorem (see Chapter 6 of~\cite{Ferguson:Book:96}) and the fact that $T_N\overset{a.s.}{\rightarrow}\theta$, as $N\rightarrow \infty$ we have 
\begin{eqnarray}
\frac{1}{N}\sum\limits_{n=1}^{N}\frac{\partial }{\partial \theta}[\psi]_{x=x_n,\theta=T_N}\overset{a.s.}{\rightarrow}\mathbb{E}_{F_{\theta}}\left[\frac{\partial }{\partial \theta}[\psi](X,\theta)\right].
\end{eqnarray}
Furthermore, using Proposition 3 of~\cite{Croux:SPL:98}, as $N\rightarrow \infty$ in Theorem~\ref{thm:asc}, 
we have
\begin{eqnarray}
\left|\frac{\partial }{\partial x}[\psi]_{x=x_{n^*}, \theta=T_N}\right|\overset{a.s.}{\rightarrow}\max\limits_{x}\left|\frac{\partial }{\partial x}[\psi](x, \theta)\right|.
\end{eqnarray}
\color{black}
As the result,
\begin{eqnarray}
\text{AIF}(\psi,\mathbf{x},1)&=&\frac{\left|\frac{\partial }{\partial x}[\psi]_{x=x_{n^*}, \theta=T_N}\right|}{\left|\frac{1}{N}\sum\limits_{n=1}^{N}\frac{\partial }{\partial \theta}[\psi]_{x=x_n,\theta=T_N}\right|}\nonumber\\
&\overset{{a.s.}}{\rightarrow}&\frac{\max\limits_{x}\left|\frac{\partial }{\partial x}[\psi](x, \theta)\right|}{\left|\mathbb{E}_{F_{\theta}}\left[\frac{\partial }{\partial \theta}[\psi](X,\theta)
	\right]\right|}\nonumber\\&:=&\text{AIF}(\psi,F_{\theta},1).\label{eq:pop1}
\end{eqnarray}

For $p>1$, we have
\begin{eqnarray}
\text{AIF}(\psi,\mathbf{x},p)&=&\frac{\left(\frac{1}{N}\sum\limits_{n=1}^N\left|\frac{\partial }{\partial x_n}[\psi]_{x=x_n,\theta=T_N}\right|^{\frac{p}{p-1}}\right)^{\frac{p-1}{p}}}{\left|\frac{1}{N}\sum\limits_{n=1}^N \frac{\partial}{\partial \theta}[\psi]_{x=x_n,\theta=T_N}\right|}\nonumber\\
&\overset{{a.s.}}{\rightarrow}&\frac{\left(\mathbb{E}_{F_{\theta}}\left[\left|\frac{\partial }{\partial x}[\psi](X,\theta)\right|^{\frac{p}{p-1}}\right]\right)^{\frac{p-1}{p}}}{\left|\mathbb{E}_{F_{\theta}}\left[ \frac{\partial}{\partial \theta}[\psi](X,\theta)\right]\right|}\nonumber\\&:=&\text{AIF}(\psi,F_{\theta},p).\label{eq:popp}
\end{eqnarray}

\subsection{Location Estimator}
We now specialize the results to the location model mentioned above. We will first characterize $\psi$ that minimizes $\text{AIF}(\psi, F_{\theta}, p)$. We will then discuss the tradeoff between the robustness to outliers and robustness to adversarial attacks, and will characterize the optimal $\psi$ that achieves this tradeoff. In the location estimator, we will assume $\psi(x,\theta)$ is monotonic in $\theta$, which will satisfy the regularity conditions established in~\cite{Huber:Book:09}.   

\subsubsection{Minimizing $\text{AIF}(\psi, F_{\theta}, p)$}\label{sec:minAIF}
For $p=1$, using~\eqref{eq:pop1}, we have 
\begin{eqnarray}
\text{AIF}(\psi,F_{\theta},1)=\frac{\max\limits_{x}\left|\psi^{'}(x-\theta)\right|}{\left|\mathbb{E}_{F_{\theta}}[\psi^{'}(X-\theta)]\right|}.\nonumber
\end{eqnarray}

For $p>1$, using~\eqref{eq:popp}, we obtain
\begin{eqnarray}
\text{AIF}(\psi, F_{\theta}, p)=\frac{\left(\mathbb{E}_{F_{\theta}}\left[\left|\psi^{'}(X-\theta)\right|^{\frac{p}{p-1}}\right]\right)^{\frac{p-1}{p}}}{\left|\mathbb{E}_{F_{\theta}}\left[ \psi^{'}(X-\theta)\right]\right|}.\label{eq:nlarge}
\end{eqnarray}

In particular, for $p=2$, we have
\begin{eqnarray}
\text{AIF}(\psi,F_{\theta},2)=\sqrt{\frac{\mathbb{E}_{F_{\theta}}[\psi^{'}(X-\theta)^2]}{(\mathbb{E}_{F_{\theta}}[\psi^{'}(X-\theta)])^2}}.\nonumber
\end{eqnarray}

From~\eqref{eq:nlarge} and using Jensen's equality, we have
\begin{eqnarray}
\text{AIF}(\psi, F_{\theta}, p)\geq 1,\nonumber
\end{eqnarray}
for which the equality holds when $\psi^{'}(x-\theta)$ is constant in $x$. 

\subsubsection{Tradeoff between $AIF(\psi, F_{\theta},p)$ and $\gamma^*(\psi, F_{\theta})$}

From (2.3.12) of~\cite{Hampel:Book:86}, we know that the influence function of the location estimator specified by $\psi$ is
\begin{eqnarray}
\text{IF}(x,\psi, F_{\theta})=\frac{\psi(x-\theta)}{\mathbb{E}_{F_{\theta}}[\psi^{'}(X-\theta)]},\nonumber
\end{eqnarray}
and hence
\begin{eqnarray}
\gamma^*(\psi, F_{\theta})=\sup_{x}\left|\frac{\psi(x-\theta)}{\mathbb{E}_{F_{\theta}}[\psi^{'}(X-\theta)]}\right|.\nonumber
\end{eqnarray}
As a result, if $\psi^{'}(X-\theta)$ is a constant that minimizes $\text{AIF}(\psi, F_{\theta}, p)$ as discussed in Section~\ref{sec:minAIF}, then $\gamma^*(\psi, F_{\theta})$ might go to $\infty$, especially for those distributions with unbounded support. To achieve a desirable tradeoff between robustness to outliers (i.e., $\gamma^*(\psi, F_{\theta})$ is small) and robustness to adversarial attacks (i.e., $\text{AIF}(\psi, F_{\theta}, p)$ is small), in the following, we characterize the optimal estimator that minimizes  $\text{AIF}(\psi, F_{\theta}, p)$ subject to a constraint on $\gamma^*(\psi, F_{\theta})$.
\begin{eqnarray}\label{eq:opt}
\min&& \text{AIF}(\psi,F,2)
\\
\text{s.t.}&& 
\gamma^*(\psi, F_{\theta})\leq \xi,\label{eq:gammabound}\\
&&\mathbb{E}_{F_{\theta}}[\psi(X-\theta)]=0,\label{eq:fisherco}\\
&& \psi^{'}(x)\geq 0,\nonumber
\end{eqnarray}
in which constraint~\eqref{eq:gammabound} implies that $\gamma^*(\psi, F_{\theta})$ is upperbounded by a positive constant $\xi$, constraint~\eqref{eq:fisherco} implies that $\psi$ is Fisher consistent, and the last constraint comes from the condition that $\psi$ is monotonic in $\theta$. %In the following, we focus on $\xi<\tilde{\xi}$, in which $\tilde{\xi}$ is the value of $\gamma^*(\psi, F_{\theta})$ when $\psi^{'}(X-\theta)$ is a constant. If $\xi>\tilde{\xi}$, then a constant $\psi^{'}(X-\theta)$ is a valid choice.

%Solution: When $d$ is large, uniform is allowed, then uniform. When $d$ is not large, need to figure out the solution.

\textcolor{black}{For location estimator, $f_{\theta}(x)=f_0(x-\theta)$, so all quantities in~\eqref{eq:opt} remain the same by assuming $\theta=0$~\cite{Hampel:Book:86}. Hence, in the following, we will solve this optimization problem assuming $\theta=0$. Once the optimal form of $\psi$ for $\theta=0$ is characterized, we can obtain the estimate of $\theta$ by solving $\sum_{n=1}^{N}\psi(x-T_N)=0$ for the general case when $\theta\neq 0$. }
\begin{thm}\label{thm:tradeoff}
	The solution to the optimization problem~\eqref{eq:opt} has the following structure:
	\begin{itemize}
		\item $\psi^{'}(x)$ satisfies
		\begin{eqnarray}
		&&\hspace{-13mm}\psi^{'}(x)= \\\nonumber
		&&\hspace{-13mm}\left\{\begin{array}{cc}
		\nu^*-\frac{ \vartheta_2^*+(\vartheta_1^*-\vartheta_2^*)F_0(x)}{ f_0(x)}, &\hspace{-2mm} \nu^* f_0(x)> \vartheta_2^*+(\vartheta_1^*-\vartheta_2^*)F_0(x);\\
		0,& \text{otherwise,}
		\end{array}\right.\label{eq:locationopt}
		\end{eqnarray}
		in which the parameters $\nu^*$, $\vartheta_1^*\geq 0$ and $\vartheta_2^*\geq 0$ are parameters chosen to satisfy the following conditions
		\begin{eqnarray}
		\mathbb{E}_{F_{0}}[\psi^{'}(X)]=1,\label{eq:1}\\
		\vartheta_1^{*}\left(\int \psi^{'}(x)F_0(x)\text{d}x-\xi\right)=0,\label{eq:2}\\
		\vartheta_2^{*}\left(\mathbb{E}_{F_0}\left[\int_{-\infty}^X\psi^{'}(t)\text{d}t\right]-\xi\right)=0,\label{eq:3}
		\end{eqnarray}
		along with $\int \psi^{'}(x)F_0(x)\text{d}x\leq \xi$ and $\mathbb{E}_{F_0}\left[\int_{-\infty}^X\psi^{'}(t)\text{d}t\right]\leq \xi$.
		\item $\psi(-\infty)$ is set as $-\mathbb{E}_{F_0}\left[\int_{-\infty}^X\psi^{'}(t)\text{d}t\right]$.
		
	\end{itemize}
	
\end{thm}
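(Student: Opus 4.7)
The plan is to reparametrize the problem in terms of the derivative $g(x) := \psi'(x) \geq 0$, observe that the resulting program is a convex quadratic, and read off the optimal structure from pointwise KKT conditions. First, I would exploit the fact that both $\text{AIF}(\psi,F_{0},2)$ and $\gamma^*(\psi,F_{0})$ are invariant under $\psi \mapsto c\psi$ for $c>0$, so I may normalize $\mathbb{E}_{F_{0}}[\psi'(X)] = 1$, which turns the objective into $\mathbb{E}_{F_{0}}[g(X)^2]$ and produces~\eqref{eq:1}. Writing $\psi(x) = \psi(-\infty) + \int_{-\infty}^{x} g(t)\,dt$ and imposing~\eqref{eq:fisherco} forces
\[
\psi(-\infty) = -\mathbb{E}_{F_{0}}\Bigl[\int_{-\infty}^{X} g(t)\,dt\Bigr],
\]
which is the second bullet of the theorem and eliminates the constant of integration. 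Since $g \geq 0$, $\psi$ is monotone nondecreasing and has zero mean, so $\gamma^* \leq \xi$ collapses to the pair $\psi(-\infty) \geq -\xi$ and $\psi(\infty) \leq \xi$. Substituting the expression above and swapping the order of integration by Fubini converts these, respectively, into
\[
\int g(x)(1-F_{0}(x))\,dx \leq \xi, \qquad \int g(x) F_{0}(x)\,dx \leq \xi,
\]
matching the two feasibility inequalities stated at the end of the theorem.

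Next, I would recognize the reformulation as a strictly convex quadratic program in $g$ over the cone $\{g \geq 0\}$ with one linear equality and two linear inequality constraints. Attaching multipliers $\nu$ to~\eqref{eq:1}, $\vartheta_1,\vartheta_2 \geq 0$ to the two inequalities, and a cone multiplier $\mu(x) \geq 0$ to $g(x) \geq 0$, the first variation of the Lagrangian yields the pointwise stationarity condition
\[
2 g(x) f_{0}(x) \;=\; \nu f_{0}(x) \;-\; \vartheta_2 \;-\; (\vartheta_1 - \vartheta_2)F_{0}(x) \;+\; \mu(x).
\]
After absorbing the factor $2$ into the multipliers (relabelling $\nu^* := \nu/2$, $\vartheta_i^* := \vartheta_i/2$), complementary slackness $\mu(x) g(x) = 0$ with $\mu(x) \geq 0$ forces exactly the two-piece formula~\eqref{eq:locationopt}: on the set where the bracketed expression is positive, $\mu \equiv 0$ and $g$ takes the stated analytic form; elsewhere the cone constraint is active and $g = 0$. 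The KKT complementary slackness for the two inequality multipliers yields~\eqref{eq:2} and~\eqref{eq:3}, while the Fisher-consistency elimination fixes $\psi(-\infty)$ as in the second bullet.

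The main technical obstacle is rigorously justifying the use of KKT in this infinite-dimensional setting with a pointwise cone constraint. I plan to argue existence of a unique optimizer via strict convexity and coercivity of $g \mapsto \int g^2 f_{0}\,dx$ on the Hilbert space $L^2(f_{0}\,dx)$, and to check Slater's condition on the reduced program (the constant function $g \equiv 1$ satisfies the normalization; for the Slater-interior point I would perturb it slightly to strictly satisfy both inequality constraints whenever $\xi$ exceeds the infimum achievable for those integrals, which is the regime of interest). A standard convex-duality theorem for conic programs in $L^2$ then produces scalar multipliers $\nu^*,\vartheta_1^*,\vartheta_2^*$ together with a nonnegative cone multiplier, after which the Lagrangian decouples across $x$ and the pointwise form follows immediately. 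A minor verification is that the resulting $\psi$ is indeed monotone (immediate from $g \geq 0$) and that the Fubini swap used for the constraints is legitimate (guaranteed by $g \geq 0$).
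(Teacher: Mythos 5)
Your proposal is correct and follows essentially the same route as the paper's own proof: reduce to $g=\psi'$, eliminate $\psi(-\infty)$ via Fisher consistency, rewrite the $\gamma^*$ constraint with Fubini as $\int gF_0\,\mathrm{d}x\leq\xi$ and $\int g(1-F_0)\,\mathrm{d}x\leq\xi$, normalize by scale invariance, and read the two-piece form of $g^*$ off the pointwise stationarity plus complementary slackness conditions (the paper phrases this as the Euler--Lagrange equation, which coincides with your pointwise KKT since no $g'$ appears). The only difference is that you flag and sketch the infinite-dimensional duality justification (existence, coercivity, Slater), which the paper takes for granted.
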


\begin{proof}
	Please see Appendix~\ref{app:tradeoff} for details.

\end{proof}

%We note that when $\xi$ is sufficiently large, then setting $c=0$ will recover the solution for the case without IF constraint. 
The condition $\nu^*f_0(x)> \vartheta_1^* F_0(x)+\vartheta_2^*(1-F_0(x))$ has a natural interpretation. It will trim data points from the tails. In particular, when $x$ is left tail (i.e. $F_0$ is small), $1-F_0$ will be close to 1. On the other hand, when $x$ is in the right tail (i.e. $1-F_0$ is small), $F_0$ will be close to 1. In these regions, $\psi^{'}=0$ if the corresponding $f_0(x)$ is small. Figure~\ref{fig:guassian} illustrates the scenario for estimating the mean of Gaussian variables for the case assuming $\vartheta_1^*>\vartheta_2^*$. It is easy to check that, in this example, if $\nu^*>2\pi (\vartheta_1^*+\vartheta_2^*)$, there exist $a$ and $b$ such that $\psi^{'}(x)=0$ when $x<a$ or $x>b$. Correspondingly, $\psi(x)$ is given as

\begin{eqnarray}
\psi(x)=\left\{\begin{array}{cc}
\xi & x\geq b\\
-\xi+\int_{a}^{x}\psi^{'}(t)\text{d}t & a<x<b\\
-\xi & x<a
\end{array}\right..\nonumber
\end{eqnarray}

\begin{figure}[!htb]
	\centering
	\includegraphics[width=0.55\textwidth]{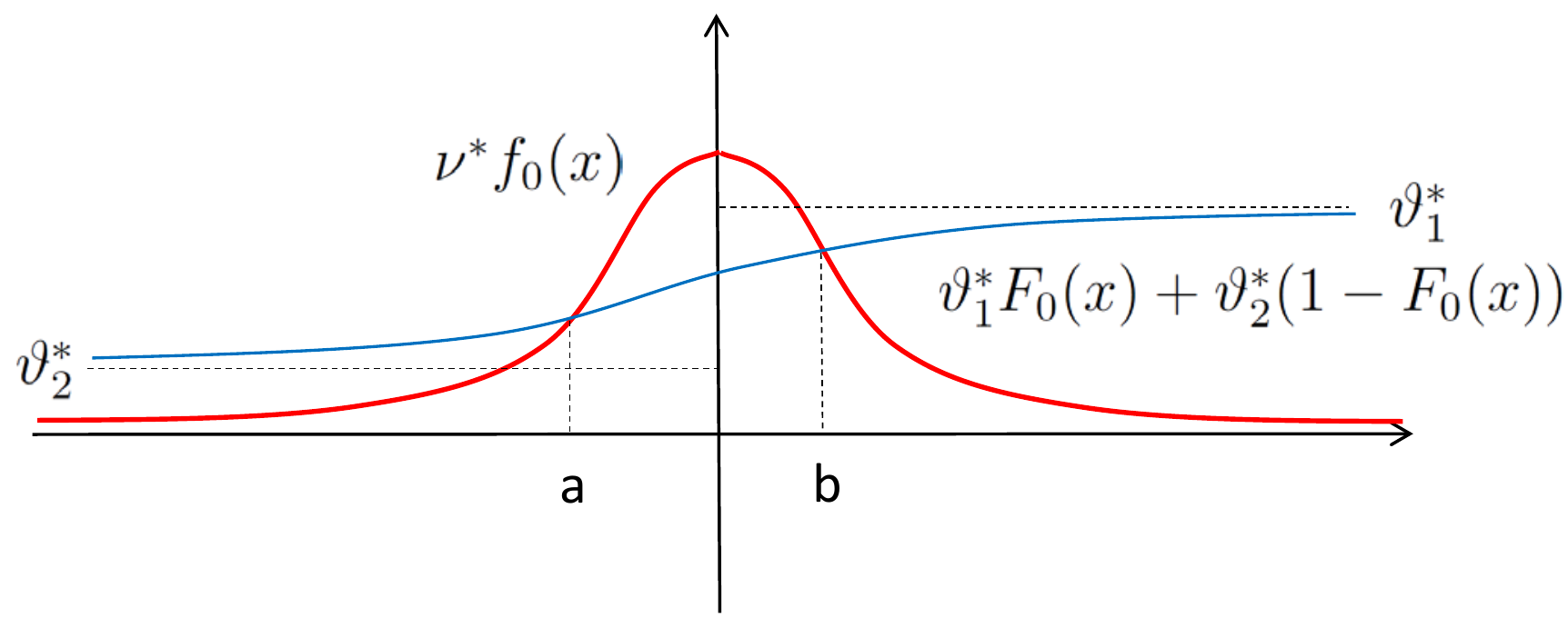}
	\caption{Gaussian mean example}
	\label{fig:guassian}	
\end{figure}

\subsection{Scale Estimator}
We now specialize the results to the scale model where $F_{\theta}(x)=F_{1}(x/\theta)$. For this model, it is natural to consider $\psi(x,\theta)=\psi(x/\theta)$~\cite{Huber:Book:09,Hampel:Book:86}. Similar to the location model, we will first characterize $\psi$ that minimizes $\text{AIF}(\psi, F_{\theta}, p)$. We will then discuss the tradeoff between the robustness to outliers and robustness to adversarial attacks, and will characterize the optimal $\psi$ that achieves this tradeoff. 

For the case with $p=1$, using~\eqref{eq:pop1}, we obtain
\begin{eqnarray}
\text{AIF}(\psi,\mathbf{x},1)&=& \frac{\left|NT_N\psi^{'}(x_{n^*}/T_N)\right|}{\left|\sum\limits_{n=1}^{N}x_n\psi^{'}(x_n/T_N)\right|}\overset{\text{a.s.}}{\rightarrow}\frac{\max\limits_{x}\left|\theta\psi^{'}(x/\theta)\right|}{\left|\mathbb{E}_{\theta}[X\psi^{'}(X/\theta)]\right|}\nonumber\\&:=&\text{AIF}(\psi,F_{\theta},1).\nonumber
\end{eqnarray}

For $p>1$, using~\eqref{eq:pop1}, we have
\begin{eqnarray}
\text{AIF}(\psi,\mathbf{x},p)&=&\frac{\left(\frac{1}{N}\sum\limits_{n=1}^N\left|\psi^{'}(x_n/T_N)\right|^{\frac{p}{p-1}}\right)^{\frac{p-1}{p}}}{\left|\frac{1}{N}\sum\limits_{n=1}^N x_n/T_N\psi^{'}(x_n/T_N)\right|}\nonumber \\&\overset{a.s.}{\rightarrow}&\frac{\left(\mathbb{E}_{\theta}\left[\left|\psi^{'}(X/\theta)\right|^{\frac{p}{p-1}}\right]\right)^{\frac{p-1}{p}}}{\left|\mathbb{E}_{\theta}\left[X/\theta\psi^{'}(X/\theta)\right]\right|}\nonumber\\&:=&\text{AIF}(\psi,F_{\theta},p).\nonumber
\end{eqnarray}
Since in scale model $F_{\theta}(x)=F_1(x/\theta)$, we have $f_{\theta}(x)=f_1\left(\frac{x}{\theta}\right)\frac{1}{\theta}$, and hence
\begin{eqnarray}
\text{AIF}(\psi,F_{\theta},p)=\frac{\left(\mathbb{E}_{F_1}\left[\left|\psi^{'}(X)\right|^{\frac{p}{p-1}}\right]\right)^{\frac{p-1}{p}}}{\left|\mathbb{E}_{F_1}\left[X\psi^{'}(X)\right]\right|}:=\text{AIF}(\psi,F_{1},p).\nonumber
\end{eqnarray}

For $p=2$, we have
\begin{eqnarray}
\text{AIF}(\psi,F_{1},2)=\frac{\left(\mathbb{E}_{F_1}\left[\psi^{'}(X)^{2}\right]\right)^{\frac{1}{2}}}{\left|\mathbb{E}_{F_1}\left[X\psi^{'}(X)\right]\right|}.\label{eq:AIFscale}
\end{eqnarray} 

\subsubsection{Minimizing $AIF(\psi, F_{\theta},p)$}\label{sec:noif}
In the following, among Fisher consistent estimators, we aim to design $\psi^{'}$ that minimizes $\text{AIF}(\psi,F_{1},2)$.

\begin{thm}\label{thm:scale}
	The optimal $\psi$ that minimizes $\text{AIF}(\psi, F_1, 2)$ has the following structure:
	\begin{itemize}
		\item For $x$ in the range of $f_1(x)$, $\psi^{'}$ satisfies 
		\begin{eqnarray}
		\psi^{'}(x)=\frac{x}{\mathbb{E}_{F_1}[X^2]}.\nonumber
		\end{eqnarray}
		\item $\psi(-\infty)$ is chosen as
		\begin{eqnarray}
		\psi(-\infty)=-\mathbb{E}_{F_{1}}\left[\int_{-\infty}^X\psi^{'}(t)\text{d}t\right].\nonumber
		\end{eqnarray}
	\end{itemize}

	With this choice of $\psi(x)$, the minimal value of $\text{AIF}(\psi, F_1, 2)$ is $1/\sqrt{\mathbb{E}_{F_1}[X^2]}$.
\end{thm}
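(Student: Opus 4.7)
The plan is to observe that the functional $\text{AIF}(\psi, F_1, 2)$ in~\eqref{eq:AIFscale} depends on $\psi$ only through its derivative $\psi'$, and that the Fisher consistency constraint $\mathbb{E}_{F_1}[\psi(X)] = 0$ only fixes the additive constant $\psi(-\infty)$. Thus the optimization decouples into two independent steps: (i) minimize the ratio $(\mathbb{E}_{F_1}[\psi'(X)^2])^{1/2}/|\mathbb{E}_{F_1}[X\psi'(X)]|$ over admissible $\psi'$, and (ii) choose $\psi(-\infty)$ so that $\mathbb{E}_{F_1}[\psi(X)] = 0$.

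For step (i), the key observation is that the ratio is homogeneous of degree zero in $\psi'$, so we may freely normalize. I would apply the Cauchy-Schwarz inequality in the Hilbert space $L^2(F_1)$ to the inner product of $X$ with $\psi'(X)$:
\begin{eqnarray}
\left(\mathbb{E}_{F_1}[X\,\psi'(X)]\right)^2 \leq \mathbb{E}_{F_1}[X^2]\cdot \mathbb{E}_{F_1}[\psi'(X)^2].\nonumber
\end{eqnarray}
Dividing through immediately yields the universal lower bound
\begin{eqnarray}
\text{AIF}(\psi, F_1, 2)^2 \geq \frac{1}{\mathbb{E}_{F_1}[X^2]},\nonumber
\end{eqnarray}
with equality if and only if $\psi'(x)$ is (almost everywhere under $F_1$) a scalar multiple of $x$. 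Using the homogeneity to normalize this multiple, the canonical representative is $\psi'(x) = x/\mathbb{E}_{F_1}[X^2]$, which matches the claimed form.

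For step (ii), given $\psi'$, the antiderivative is determined up to a constant by $\psi(x) = \psi(-\infty) + \int_{-\infty}^x \psi'(t)\,\mathrm{d}t$. Imposing Fisher consistency $\mathbb{E}_{F_1}[\psi(X)] = 0$ and solving for the additive constant directly yields $\psi(-\infty) = -\mathbb{E}_{F_1}\!\left[\int_{-\infty}^X \psi'(t)\,\mathrm{d}t\right]$ as stated. Substituting the optimal $\psi'$ into the lower bound gives the minimal value $1/\sqrt{\mathbb{E}_{F_1}[X^2]}$.

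I do not expect a genuine obstacle in this proof: once one recognizes the Cauchy-Schwarz structure and the decoupling between $\psi'$ and $\psi(-\infty)$, every step is immediate. The only mild technicality is verifying that the integrals defining $\psi(-\infty)$ converge, which follows from the regularity conditions already assumed at the start of Section~\ref{sec:pop} (in particular the envelope condition $|x\psi'(x)| \leq K(x)$ with $\mathbb{E}_{F_1}[K(X)] < \infty$).
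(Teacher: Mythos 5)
Your proof is correct, but it takes a genuinely different route from the paper. The paper treats this as a constrained calculus-of-variations problem: after dropping $\psi(-\infty)$ from the objective, it uses scale invariance to normalize $\mathbb{E}_{F_1}[X\psi'(X)]=1$, forms the Lagrangian $\mathcal{L}=\tfrac12\int g^2 f_1\,\mathrm{d}x+\nu\bigl(-\int x g f_1\,\mathrm{d}x+1\bigr)$, and reads off the Euler--Lagrange equation $g^*(x)f_1(x)-\nu^* x f_1(x)=0$, giving $g^*(x)=\nu^* x$ with $\nu^*=1/\mathbb{E}_{F_1}[X^2]$ from the normalization. You instead observe that the objective is a Rayleigh-type ratio in $L^2(F_1)$ and apply Cauchy--Schwarz to $\mathbb{E}_{F_1}[X\psi'(X)]$, which yields the bound $\text{AIF}(\psi,F_1,2)\geq 1/\sqrt{\mathbb{E}_{F_1}[X^2]}$ together with the exact equality condition ($\psi'$ proportional to $x$, $F_1$-a.e.) in one stroke; the zero-degree homogeneity then fixes the canonical representative, and the choice of $\psi(-\infty)$ for Fisher consistency is identical in both arguments. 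Your route is shorter and more self-contained for this unconstrained problem, and it makes the optimality certificate explicit rather than relying on first-order stationarity of a convex functional. What the paper's variational setup buys is reusability: the same Lagrangian/Euler--Lagrange template extends directly to the IF-constrained tradeoff problems (Theorems~\ref{thm:tradeoff} and~\ref{thm:scaletrade}), where the inequality constraints and the multiplier functions $\lambda(x),\vartheta_1,\vartheta_2$ produce the trimmed solutions and a plain Cauchy--Schwarz argument no longer suffices. One cosmetic point: equality in Cauchy--Schwarz allows $\psi'(x)=cx$ with any $c\neq 0$ (the denominator carries an absolute value), so you should note that the stated representative is just the normalization $\mathbb{E}_{F_1}[X\psi'(X)]=1$, exactly as the paper's scale-invariance remark does; this is not a gap.
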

\begin{proof}
	Please see Appendix~\ref{app:scale} for details.
\end{proof}

We note that for scale estimator $\frac{\partial \psi}{\partial \theta}=-\psi^{'}(x/\theta)x/\theta^2$, hence for this particular choice of $\psi^{'}$ in Theorem~\ref{thm:scale}, $\frac{\partial \psi}{\partial \theta}=-x^2/(\theta^3\mathbb{E}_{F_1}[X^2])$, which means $\psi(x,\theta)$ is monotone in $\theta$. This ensures that the obtained $\psi(x)$ satisfies the regularity conditions~\cite{Huber:Book:09} mentioned at the beginning of this section.

\subsubsection{Tradeoff between $AIF(\psi, F_{\theta},p)$ and $\gamma^*(\psi, F_{\theta})$}

Similar to the location estimation case, we can also design $\psi$ to minimize $\text{AIF}(\psi,F_{\theta},p)$ with a constraint on $\gamma^*(\psi, F_{\theta})$. From (2.3.17) of~\cite{Hampel:Book:86}, we know that for scale estimators
\begin{eqnarray}
\text{IF}(x,\psi, F_{\theta})=\frac{\psi(x/\theta)\theta}{\mathbb{E}_{F_{\theta}}[X/\theta\psi^{'}(X/\theta)]}.\nonumber
\end{eqnarray} 
To facilitate the analysis, we will focus on $\psi$ that is monotonic. Since in scale model, $\psi(x,\theta)=\psi(x/\theta)$, we can simply focus on the case of $\theta=1$. Hence, we will solve the following optimization problem to strike a desirable tradeoff between robustness against outliers and robustness against adversarial attacks.
\begin{eqnarray}
\min && \frac{\mathbb{E}_{F_1}\left[\psi^{'}(X)^{2}\right]}{\left(\mathbb{E}_{F_1}\left[X\psi^{'}(X)\right]\right)^2},\label{eq:scalecon}\\
\text{s.t.}&&	\gamma^{*}(\psi, F_1)=\sup\limits_{x}\left|\frac{\psi(x)}{\mathbb{E}_{F_{1}}[X\psi^{'}(X)]}\right|\leq \xi,\label{eq:ic}\\
&& \mathbb{E}_{F_1}[\psi]=0,\label{eq:fisher}\\
&& \psi^{'}(x)\geq 0.\label{eq:pos}
\end{eqnarray}
Here, constraint~\eqref{eq:ic} is a constraint on the outliers influence,~\eqref{eq:fisher} implies that $\psi$ is Fisher consistent. %In the following, we will focus on $\xi<\tilde{\xi}$, in which $\tilde{\xi}$ is the value of $\gamma^{*}(\psi, F_1)$ when $\psi$ is optimal solution for the case with no IF constraint discussed in Section~\ref{sec:noif}.
%\textcolor{red}{We need to check $\psi$ in monotonic in $\theta$ later.}

\begin{thm}\label{thm:scaletrade}
	The solution to~\eqref{eq:scalecon} has the following structure:
	
	\begin{itemize}
		\item $\psi^{'}$ has the following form
		\begin{eqnarray}
		&&\hspace{-12mm}	\psi^{'}(x)=\\\nonumber&&\hspace{-12mm}\left\{\begin{array}{cc}
		\nu^*x-\frac{ \vartheta_2^* +(\vartheta_1^*-\vartheta_2^*)F_1(x)}{f_1(x)}, & \hspace{-2mm}\nu^*x f_1(x)> \vartheta_2^* +(\vartheta_1^*-\vartheta_2^*)F_1(x);\\
		0,& \text{otherwise},
		\end{array}\right.\label{eq:case1}
		\end{eqnarray}
		in which $\nu^*$, $\vartheta_1^*\geq 0$ and $\vartheta_2^*\geq 0$ are chosen to satisfy
		\begin{eqnarray}
		\mathbb{E}_{F_{1}}[X\psi^{'}(X)]&=& 1,\nonumber\\
		\vartheta^*_1\left(		\int_{-\infty}^{\infty}\psi^{'}(x)F_1(x)\text{d}x- \xi\right)&=&0,\nonumber\\
		\vartheta^*_2\left(		\mathbb{E}_{F_1}\left[\int_{-\infty}^{X}\psi^{'}(t)dt\right]-\xi\right)&=&0,\nonumber
		\end{eqnarray}
		along with $\int_{-\infty}^{\infty}\psi^{'}(x)F_1(x)\text{d}x\leq  \xi$ and $	\mathbb{E}_{F_1}\left[\int_{-\infty}^{X}\psi^{'}(t)dt\right]\leq \xi$.
		
		\item $\psi(-\infty)$ is set to be $-\mathbb{E}_{F_1}\left[\int_{-\infty}^{X}\psi^{'}(t)dt\right]$.
	\end{itemize}
	
\end{thm}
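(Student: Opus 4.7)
My plan is to mirror the calculus-of-variations argument of Theorem~\ref{thm:tradeoff}, with the only substantive change coming from the extra factor of $X$ in the scale-model denominator $\mathbb{E}_{F_1}[X\psi'(X)]$ appearing in~\eqref{eq:AIFscale}; this is what produces the term $\nu^* x$ in~\eqref{eq:case1} in place of the simple $\nu^*$ of~\eqref{eq:locationopt}. First I would exploit the scale invariance of~\eqref{eq:scalecon}--\eqref{eq:ic}: replacing $\psi$ by $c\psi$ ($c>0$) leaves both the AIF ratio and $\gamma^*$ unchanged, so without loss of generality I can impose the normalization $\mathbb{E}_{F_1}[X\psi'(X)]=1$. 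After this normalization the problem reduces to minimizing the convex quadratic $\mathbb{E}_{F_1}[\psi'(X)^2]$ subject to $\mathbb{E}_{F_1}[X\psi'(X)]=1$, $\mathbb{E}_{F_1}[\psi]=0$, $\psi'\geq 0$, and $\sup_x|\psi(x)|\leq \xi$.

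The second step is to convert the sup-norm bound into integral constraints on $\psi'$ alone. Because $\psi'\geq 0$, $\psi$ is non-decreasing; together with Fisher consistency this forces $\psi(-\infty)\leq 0\leq \psi(+\infty)$, so $\sup_x|\psi(x)|=\max\{-\psi(-\infty),\psi(+\infty)\}$. Writing $\psi(x)=\psi(-\infty)+\int_{-\infty}^x\psi'(t)\,dt$ and enforcing $\mathbb{E}_{F_1}[\psi]=0$ pins down $\psi(-\infty)=-\mathbb{E}_{F_1}[\int_{-\infty}^X\psi'(t)\,dt]$, which is exactly the boundary value stated in the theorem. A Fubini exchange rewrites $\mathbb{E}_{F_1}[\int_{-\infty}^X\psi'(t)\,dt]=\int\psi'(t)(1-F_1(t))\,dt$ and $\psi(+\infty)=\int\psi'(t)F_1(t)\,dt$, so the two-sided sup bound becomes precisely the pair of integral inequalities $\int\psi'(x)F_1(x)\,dx\leq\xi$ and $\mathbb{E}_{F_1}[\int_{-\infty}^X\psi'(t)\,dt]\leq\xi$ appearing in the statement.

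At this point the problem is a quadratic minimization over $\psi'$ with linear (in)equality constraints and the pointwise bound $\psi'\geq 0$. I would then form the Lagrangian with multiplier $\nu$ for the normalization and $\vartheta_1,\vartheta_2\geq 0$ for the two integral inequalities, and minimize pointwise. For each $x$, the integrand is a convex quadratic in $\psi'(x)$; setting its derivative to zero and clamping to zero when the unconstrained minimizer is negative gives
\begin{equation*}
\psi'(x)=\max\!\left\{\frac{\nu}{2}x-\frac{\vartheta_2+(\vartheta_1-\vartheta_2)F_1(x)}{2f_1(x)},\,0\right\},
\end{equation*}
which, after the reparameterization $\nu^*=\nu/2$, $\vartheta_i^*=\vartheta_i/2$, is exactly~\eqref{eq:case1}. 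Complementary slackness on the two integral inequalities yields the stated product conditions on $\vartheta_1^*$ and $\vartheta_2^*$, while the normalization fixes $\nu^*$ through $\mathbb{E}_{F_1}[X\psi'(X)]=1$.

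The principal obstacle is justifying that the pointwise KKT conditions are both necessary and sufficient in the presence of the positivity constraint, whose active set depends on the multipliers. This should follow because, after the normalization, the objective is strictly convex in $\psi'$ and all remaining constraints are linear, so any stationary point of the Lagrangian satisfying KKT is the unique global minimizer; the same convexity argument underlies the proof of Theorem~\ref{thm:tradeoff}, so the adaptation is routine. A secondary point worth checking is that the resulting $\psi$ is monotone (ensuring the regularity hypotheses at the start of Section~\ref{sec:pop} hold), which is immediate from $\psi'\geq 0$, and that Fisher consistency is preserved by the explicit choice of $\psi(-\infty)$.
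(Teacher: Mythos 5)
Your proposal is correct and follows essentially the same route as the paper's Appendix proof: normalize $\mathbb{E}_{F_1}[X\psi^{'}(X)]=1$ by scale invariance, decouple $\psi(-\infty)$ via Fisher consistency, rewrite the sup-norm bound as the two integral constraints through Fubini, and solve the resulting constrained quadratic problem via Lagrange multipliers with complementary slackness (your pointwise minimization of the Lagrangian integrand is equivalent to the paper's Euler--Lagrange step, since no $g^{'}$ appears, and your factor-of-two reparameterization of the multipliers is immaterial). The only cosmetic difference is that the paper explicitly splits into the cases $\mathbb{E}_{F_1}[X\psi^{'}(X)]>0$ and $<0$ before normalizing, whereas you assume the positive case implicitly, which matches the case the paper actually works out.
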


\begin{proof}
	The proof follows similar strategy as that of the proof of Theorem~\ref{thm:tradeoff} and~\ref{thm:scale}. Details can be found in Appendix~\ref{app:scaletrade}.
\end{proof}
Similar to the location estimator case, the condition $\nu^*xf_1(x)> \vartheta_1^* F_1(x)+\vartheta_2^* (1-F_1(x))$ will limit the influences of data points at the tails.
%For the case with $p=2$, we obtain
%\begin{eqnarray}
%\text{AIF}(\psi,\mathbf{x},2)&=&\frac{\sqrt{N\sum\limits_{n=1}^N\left(T_N\psi^{'}(x_n/T_N)\right)^2}}{\left|\sum\limits_{n=1}^{N}x_n\psi^{'}(x_n/T_N)\right|}\overset{\text{a.s.}}{\rightarrow} \frac{\sqrt{\mathbb{E}[(\theta\psi^{'}(X/\theta))^2]}}{\mathbb{E}[X\psi^{'}(X/\theta)]}.
%\end{eqnarray}

\section{Extension: $L$-estimator}\label{sec:L}
In this section, we briefly discuss how to extend the analysis above to other class of estimators. We will use $L$-estimator as an example. $L$-estimator has the following form~\cite{Huber:Book:09,Hampel:Book:86}:
\begin{eqnarray}
T_N(\mathbf{x})=\sum\limits_{n=1}^N a_nx_{(n)},\nonumber
\end{eqnarray}
where $x_{(1)}\leq \cdots\leq x_{(N)}$ are the ordered sequence of $\mathbf{x}$, and $a_n$'s are coefficients. For example, for location estimator, a natural choice of $a_n$ is
\begin{eqnarray}
a_n=\frac{\int_{(n-1)/N}^{n/N}h(t)\text{d}t}{\int_{0}^{1}h(t)\text{d}t},\label{eq:location}
\end{eqnarray}
for a given function $h(t)$ such that $\int_{0}^{1}h(t)\text{d}t\neq 0$. For example, setting $h(t)=\delta(t-1/2)$ leads to the median estimator.

We first look at the given sample scenario. Let $\tilde{\mathbf{x}}=\mathbf{x}+\Delta \mathbf{x}$, and let $\tilde{x}_{(1)}\leq\cdots\leq \tilde{x}_{(N)}$ be the ordered sequence of $\tilde{\mathbf{x}}$. Hence, 
\begin{eqnarray}\label{eq:y}
T_N(\mathbf{x}+\Delta \mathbf{x})=\sum\limits_{n=1}^N a_n\tilde{x}_{(n)}.
\end{eqnarray}

%Furthermore, for certain value of $p$ (e.g., when $p=1$ or $p=2$), for any $\Delta\hat{\mathbf{x}}$ that makes the ordering of $\mathbf{x}+\Delta\hat{\mathbf{x}}$ different from the ordering of $\mathbf{x}$, we can find another $\Delta\mathbf{x}$ such that the ordering of $\mathbf{x}+\Delta\mathbf{x}$ is the same as the ordering of $\mathbf{x}$ and $||\Delta\mathbf{x}||_p\leq ||\Delta\hat{\mathbf{x}}||_p$ for any $\eta$. One can show this by the following simple argument. Suppose there exist two indexes $i$ and $j$ such that ${x}_i\leq {x}_j$, but ${x}_i+\Delta \hat{{x}}_i \geq \mathbf{x}_j+\Delta\hat{{x}}_j$. Then we can we can set $ \Delta{{x}}_i= {x}_j+\Delta\hat{{x}}_j-\mathbf{x}_i $ and $\Delta{\mathbf{x}}_j= \mathbf{x}_i+\Delta\hat{\mathbf{x}}_i-\mathbf{x}_j $. By this switching, we have $\mathbf{x}_i+\Delta {\mathbf{x}}_i \leq \mathbf{x}_j+\Delta{\mathbf{x}}_j$. Furthermore, it is simple to verify that $|\Delta {\mathbf{x}}_i|^p+|\Delta {\mathbf{x}}_j|^p\leq |\Delta \hat{\mathbf{x}}_i|^p+|\Delta \hat{\mathbf{x}}_j|^p$ (by verifying this for 6 different cases depending on the relative values of $\hat{\mathbf{x}}_i$ and $\hat{\mathbf{x}}_j$). 

For general $\Delta\mathbf{x}$, the ordering of $\mathbf{x}+\Delta\mathbf{x}$ may not necessarily be the same as the ordering of $\mathbf{x}$. For example, $\tilde{x}_{(1)}$ might come from $x_{(2)}$, i.e., $\tilde{x}_{(1)}=x_{(2)}+\Delta (x_{(2)})$. This possibility could make the following analysis messy. However, it is easy to see that when $\eta$ is sufficiently small (more specifically, when $N^{1/p}\eta\leq 1/2 \min\limits_{{x}_i\neq {x}_j }|{x}_i- {x}_j|$), the ordering of $\mathbf{x}+\Delta\mathbf{x}$ be the same as $\mathbf{x}$ for all $\Delta\mathbf{x}$'s that satisfy the constraint~\eqref{eq:norconstranit}. As the result, for the purpose of charactering AIF (which involves making $\eta\downarrow 0$), we can limit~\eqref{eq:y} to the following form
\begin{eqnarray}
T_N(\mathbf{x}+\Delta \mathbf{x})=\sum\limits_{n=1}^N a_n(x_{(n)}+\Delta (x_{(n)})).\nonumber
\end{eqnarray}

Hence
\begin{eqnarray}
T_N(\mathbf{x}+\Delta \mathbf{x})-T_N(\mathbf{x})=\sum\limits_{n=1}^{N}a_n\Delta x_{(n)},\nonumber
\end{eqnarray}
and~\eqref{eq:optimalintro} becomes
\begin{eqnarray}
\min&& -\sum\limits_{n=1}^{N}a_n\Delta x_{(n)},\nonumber\\
\text{s.t.}&& \frac{1}{N}||\Delta \mathbf{x}||_p^p\leq \eta^p.\nonumber
\end{eqnarray}

Using the exactly same approach as those in the proof of Theorem~\ref{thm:asc}, we have the following characterization.
For $p=1$, let $n^*=\arg\max\limits_{n} |a_n|$,
\begin{eqnarray}
\Delta x_{(n^*)}^*=\text{sign}\left\{ a_{n^*}\right\}N\eta,\nonumber
\end{eqnarray}
and $\Delta x_{(n)}^*=0,\forall n\neq n^*$. Hence,
\begin{eqnarray}
\text{AIF}(T_N,\mathbf{x},1)=N\left| a_{n^*}\right|.\nonumber
\end{eqnarray}

For $p>1$, we have
\begin{eqnarray}
\Delta x_{(n)}^*=\frac{|a_n|^{1/(p-1)}(N)^{1/p}}{(\sum|a_n|^{p/(p-1)})^{1/p}}\text{sign}(a_n)\eta.\nonumber
\end{eqnarray}

Hence, 
\begin{eqnarray}
\text{AIF}(\psi,\mathbf{x},p)&=&\sum a_n\frac{|a_n|^{1/(p-1)}(N)^{1/p}}{(\sum|a_n|^{p/(p-1)})^{1/p}}\text{sign}(a_n)\nonumber\\
&=&\frac{\sum\limits_{n=1}^N|a_n|^{p/(p-1)}}{\left(\frac{1}{N}\sum\limits_{n=1}^N|a_n|^{p/(p-1)}\right)^{1/p}}\label{eq:AIFLp}.
\end{eqnarray}

%For median estimator, $\text{AIF}=1$. 

When $p=2$, this can be simplified to 
\begin{eqnarray}
\text{AIF}(T_N,\mathbf{x},2)&=&\frac{\sqrt{N}\sum\limits_{n=1}^N a_n^2}{\sqrt{\sum\limits_{n=1}^N  a_n^2}}=\sqrt{N\sum\limits_{n=1}^N  a_n^2}\label{eq:AIFL}.
\end{eqnarray}

For example, for $\alpha$-trimmed estimator~\cite{Hampel:Book:86} defined by
\begin{eqnarray}
T_N^{\alpha}(\mathbf{x})=\frac{1}{N-2\lfloor\alpha N\rfloor }\sum_{n=\lfloor\alpha N\rfloor+1}^{N-\lfloor\alpha N\rfloor}x_{(n)},\nonumber
\end{eqnarray} 
for a given parameter $0<\alpha<1/2$. For this $\alpha$-trimmed estimator, using~\eqref{eq:AIFLp}, we obtain
$$
\text{AIF}(T_N^{\alpha},\mathbf{x},p)=\frac{N^{1/p}}{(N-2\lfloor\alpha N\rfloor)^{1/p}}.
$$
%\textcolor{red}{Check (131) and obtain AIF for $\alpha$ in p.}

If $a_n$s are chosen as~\eqref{eq:location}, then~\eqref{eq:AIFL} simplifies to
\begin{eqnarray}
\text{AIF}(T_N,\mathbf{x},2)&=&\sqrt{\frac{\frac{1}{N}\sum\limits_{n=1}^N\left(\int_{(n-1)/N}^{n/N}h(t)\text{d}t\right)^2}{\left(\frac{1}{N}\int_{0}^{1}h(t)\text{d}t\right)^2}}\nonumber\\
&\geq&\sqrt{\frac{\left(\frac{1}{N}\sum\limits_{n=1}^N\int_{(n-1)/N}^{n/N}h(t)\text{d}t\right)^2}{\left(\frac{1}{N}\int_{0}^{1}h(t)\text{d}t\right)^2}}\nonumber\\
&\geq&1,\nonumber
\end{eqnarray}
in which the first inequality is due to Jensen's inequality, and both inequalities become equality when $a_n=\int_{(n-1)/N}^{n/N}h(t)\text{d}t$ is a constant in $n$, \textcolor{black}{i.e., $a_n=1/N$ and the estimator becomes the empirical mean}.

%For median estimator, $\text{AIF}=1$.
%\subsection{Population Case}
%Now, for the population scenario, suppose $X_i$ are i.i.d generated with $f_{\theta}(X)$ and $N\rightarrow \infty$. For location estimator, $a_n$ is chosen as in~\eqref{eq:location}, then we have for $p=1$
%\begin{eqnarray}
%\text{AIF}(T_N,\mathbf{x},1)\overset{\text{a.s.}}{\rightarrow} %\max\{|h|\},\nonumber
%\end{eqnarray}
%and for $p=2$, we have
%\begin{eqnarray}
%\text{AIF}(T_N,\mathbf{x},2)=\sqrt{\frac{N\sum\limits_{n=1}^N\left(\int_{(n-1)/N}^{n/N}h(t)\text{d}t\right)^2}{\left(\int_{0}^{1}h(t)\text{d}t\right)^2}}&\overset{\text{a.s.}}{\rightarrow}&\sqrt{\frac{\int_{0}^{1}h^2(t)\text{d}t}{(\int_{0}^{1}h(t)\text{d}t)^2}}.\label{eq:hbound}
%\end{eqnarray}
%Again, using Jensen's inequality,~\eqref{eq:hbound} is no less than 1. Furthermore, if $h(t)$ is a constant,~\eqref{eq:hbound} achieves the lower bound 1.
\section{Numerical Examples}\label{sec:example}

In this section, we provide numerical examples to illustrate results obtained. 

We consider location estimation and illustrate the optimal estimator obtained in Theorem~\ref{thm:tradeoff} for the case when $f_0$ is exponential random variable $f_0(x)=e^{-x}, x\geq 0$, hence $f_{\theta}$ is shifted exponential random variable $f_{\theta}=e^{-(x-\theta)},x\geq \theta$ and the goal is to estimate $\theta$. As the exponential random variable has a unbounded support, choosing $\psi^{'}$ to be a constant, which minimizes AIF, will lead to an infinite IF. Hence, we use Theorem~\ref{thm:tradeoff} to characterize the optimal $\psi$ that minimizes AIF while satisfying the condition that $\text{IF}\leq \xi$.

For this particular class of distribution, the condition $\nu^* f_0(x)> \vartheta_1^* F_0(x)+\vartheta_2^*(1-F_0(x))$ becomes $0\leq x< a$ with the parameter $a$ chosen as
\begin{eqnarray}e^{-a}=\frac{\vartheta_1^*}{\nu^*+\vartheta_1^*-\vartheta_2^*}.\label{eq:a}\end{eqnarray}
Hence we have
\begin{eqnarray}
\psi^{'}(x)=\left\{\begin{array}{cc}
\nu^*+\vartheta_1^*-\vartheta_2^*-\vartheta_1^*e^{x}, & 0\leq x<a;\\
0,& \text{otherwise},
\end{array}\right.\nonumber
\end{eqnarray}
for which the parameters $\nu^*, \vartheta_1^*,\vartheta_2^*$ are chosen to satisfy the conditions specified in Theorem~\ref{thm:tradeoff}. After tedious calculation, conditions~\eqref{eq:1} - \eqref{eq:3} can be simplified to
\begin{eqnarray}
&&\hspace{-9mm}(\nu^*+\vartheta_1^*-\vartheta_2^*)(1-e^{-a})-\vartheta_1^*a=1,\nonumber\\
&&\hspace{-9mm}\vartheta_1^*((\nu^*+\vartheta_1^*-\vartheta_2^*)(a-1+e^{-a})-\vartheta_1^*(e^a-1)+a\vartheta_1^*-\xi)=0,\nonumber\\
&&\hspace{-9mm}\vartheta_2^*((\nu^*+\vartheta_1^*-\vartheta_2^*)(1-e^{-a})-\vartheta_1^*a-\xi)=0.\nonumber
\end{eqnarray}
From here, we know that if $\xi>1$, $\vartheta_2^*=0$, using this fact along with~\eqref{eq:a}, we have that the conditions are simplified to 
\begin{eqnarray}
\nu^*-a\vartheta_1^*&=&1,\nonumber\\
2a\vartheta_1^*+(a-2)\nu^*&=&\xi,\nonumber\\
\vartheta_2^*&=&0.\nonumber
\end{eqnarray}
Using these, we can express $\nu^*$ and $\vartheta_1^*$ in terms of $a$:
\begin{eqnarray}
\nu^*&=&\frac{\xi+2}{a},\nonumber\\
\vartheta_1^*&=&\frac{\xi+2-a}{a^2}.\nonumber
\end{eqnarray}

Finally, for any given $\xi>1$, the value of $a$ can be determined by~\eqref{eq:a}, which is simplified to
\begin{eqnarray}
e^{-a}=\frac{\vartheta_1^*}{\nu^*+\vartheta_1^*-\vartheta_2^*}=\frac{\xi+2-a}{(\xi+1)a+\xi+2}.\label{eq:asolution}
\end{eqnarray}

\begin{figure}[!htb]
	\centering
	\includegraphics[width=0.55\textwidth]{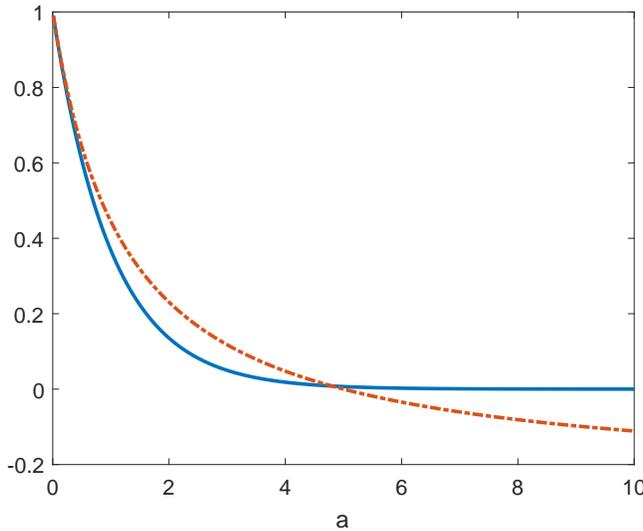}
	\caption{The solution of $a$}
	\label{fig:xi3}	
\end{figure}

It is easy to check that, for any given $\xi>1$, there is always a unique positive solution to~\eqref{eq:asolution}. For example, Figure~\ref{fig:xi3} illustrates the solution for $a$ when $\xi=3$. In this figure, the dotted curve is the right side of~\eqref{eq:asolution} and the solid curve is the left side of~\eqref{eq:asolution}. From the figure, we know that these two curves have two intersections $a=0$ and $a=4.8$. With these parameters, we know that
\begin{eqnarray}
\psi^{'}(x)=\left\{\begin{array}{cc}
1.0417-0.0087e^{x}, & 0\leq x\leq 4.8;\\
0,& \text{otherwise},
\end{array}\right.
\end{eqnarray}
hence the optimal $\psi$ is
\begin{eqnarray}
\psi^{'}(x)=\left\{\begin{array}{cc}
\xi,  &\hspace{-2mm} x\geq 4.8;\\
1.0417x-0.0087(e^{x}-1)-1, & \hspace{-2mm} 0\leq x\leq 4.8.
\end{array}\right.\nonumber
\end{eqnarray}

Figure~\ref{fig:psixi3} illustrates the obtained $\psi(x)$ for the case with $\xi=3$.
\begin{figure}[!htb]
	\centering
	\includegraphics[width=0.55\textwidth]{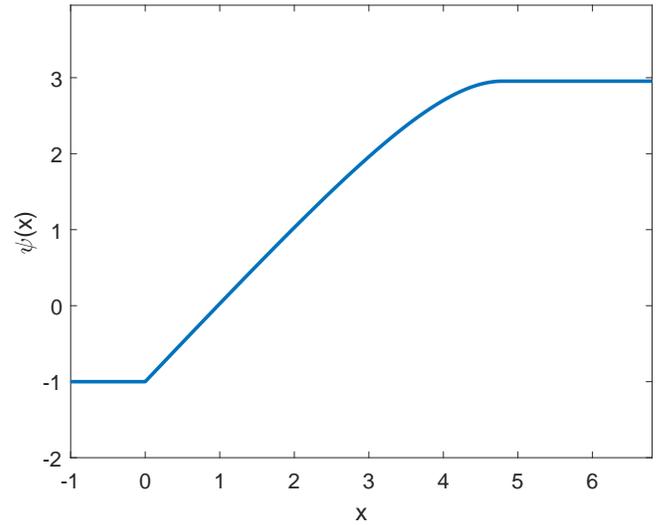}
	\caption{$\psi$ that minimizes AIF when $\text{IF}\leq 3$.}
	\label{fig:psixi3}	
\end{figure}

Figure~\ref{fig:AIFvsxi} illustrates the tradeoff curve between AIF and IF. We obtain this curve by solving~\eqref{eq:asolution} and other parameters using different values of $\xi$. As we can see from the curve, as $\xi$ increases, AIF decreases. Furthermore, the value of AIF converges to 1, the lower bound established in Section~\ref{sec:minAIF}.
\begin{figure}[!htb]
	\centering
	\includegraphics[width=0.55\textwidth]{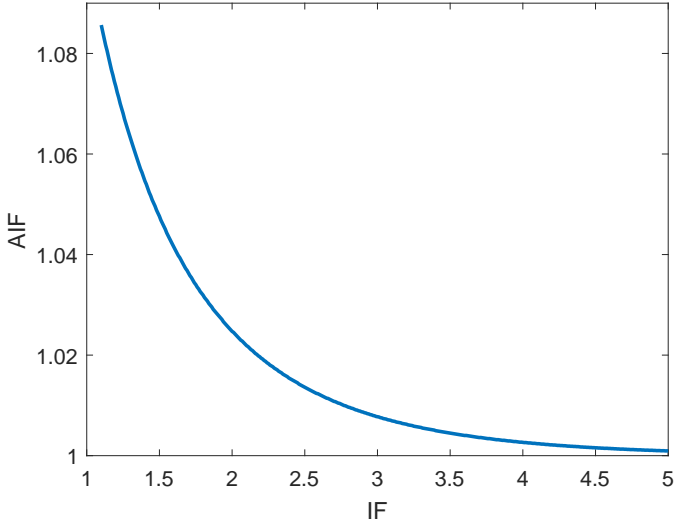}
	\caption{ Tradeoff between AIF and IF of location estimator for exponential random variables. }
	\label{fig:AIFvsxi}	
\end{figure}

\section{Conclusion}\label{sec:con}

Motivated by recent data analytics applications, we have studied adversarial robustness of robust estimators. We have introduced the concept of AIF to quantify an estimator's sensitivity to such adversarial attacks and have provided an approach to characterize AIF for given robust estimator. We have further designed optimal estimators that minimize AIF. From this characterization, we have identified a tradeoff between AIF and IF, and have designed estimators that strike a desirable tradeoff between these two quantities. 
\textcolor{black}{We note that AIF only captures the impact of vanishingly small corruptions. It is of interest to investigate the impact of non-vanishing corruptions and its connection with AIF in the future. }

\appendices
\section{Proof of Theorem~\ref{thm:asc}}\label{app:asc}
From~\eqref{eq:Mestimate}, we know that $T_N$ and $\mathbf{x}$ satisfy
\begin{eqnarray}
\sum\limits_{n=1}^N \psi(x_n,T_N)=0.\nonumber
\end{eqnarray}

Hence, we have
\begin{eqnarray}\label{eq:partialxn}
\frac{\partial }{\partial x_n}T_N=\frac{-\frac{\partial }{\partial x}[\psi]_{x=x_n, \theta=T_N}}{\sum\limits_{n=1}^{N}\frac{\partial }{\partial \theta}[\psi]_{x=x_n,\theta=T_N}}.
\end{eqnarray}

Based on Taylor expansion, we have
\begin{eqnarray}
&&T_N(\mathbf{x}+\Delta \mathbf{x})-T_N(\mathbf{x})\nonumber\\&&\hspace{5mm}=\sum\limits_{n=1}^{N}\Delta x_n \frac{\partial }{\partial x_n}T_N+\text{higher order terms}. \nonumber
\end{eqnarray}

When $\eta$ is small, the adversary can solve the following problem and obtain an $o(\eta)$ optimal solution
\begin{eqnarray}\label{eq:linear}
\min\limits_{\Delta\mathbf{x}}&& -\sum\limits_{n=1}^{N}\Delta x_n c_n,\nonumber\\
\text{s.t.}&& ||\Delta \mathbf{x}||_p^p\leq N\eta^p,
\end{eqnarray}
in which 
\begin{eqnarray}
c_n:=\frac{\partial }{\partial x_n}T_N.\nonumber
\end{eqnarray}

For $p=1$, this is a linear programing problem, whose solution is simple. In particular, let $n^*=\arg\max\limits_{n} | \frac{\partial }{\partial x_n}T_N|$, which is the same as $\arg\max\limits_{n} \left |\frac{\partial }{\partial x}[\psi]_{x=x_n, \theta=T_N}\right|$ due to~\eqref{eq:partialxn}, it is easy to check that we have
\begin{eqnarray}
\Delta x_{n^*}^*=\text{sign}\left\{ \frac{\partial }{\partial x_{n^*}}T_N\right\}N\eta,\nonumber
\end{eqnarray}
and $\Delta x_{n}^*=0,\forall n\neq n^*$. Hence,
\begin{eqnarray}
\text{AIF}(\psi,\mathbf{x},1)=N\left| \frac{\partial }{\partial x_{n^*}}T_N\right|=\frac{\left|N\frac{\partial }{\partial x}[\psi]_{x=x_{n^*}, \theta=T_N}\right|}{\left|\sum\limits_{n=1}^{N}\frac{\partial }{\partial \theta}[\psi]_{x=x_n,\theta=T_N}\right|}.\nonumber
\end{eqnarray}

For \textcolor{black}{ $\infty>p> 1$},~\eqref{eq:linear} is a convex optimization problem. To solve this, we form Lagrange
\begin{eqnarray}
\mathcal{L}(\Delta \mathbf{x},\lambda)= -\sum\limits_{n=1}^{N}\Delta x_n c_n+ \lambda\left(||\Delta \mathbf{x}||_p^p-N\eta^p\right).\nonumber
\end{eqnarray}

The corresponding optimality conditions are:
\begin{eqnarray}
-c_n+\lambda^* p \text{sign} (\Delta x_n^*) |\Delta x_n^*|^{p-1}&=&0,  \forall n\label{eq:lag}\\
\lambda^*&\geq&0,\nonumber\\
\lambda^* (||\Delta \mathbf{x}^*||_p^p-N\eta^p)&=&0.\nonumber
\end{eqnarray}

From~\eqref{eq:lag}, we know that $\lambda^*\neq 0$, hence 
\begin{eqnarray}
||\Delta \mathbf{x}^*||_p^p=N\eta^p,\label{eq:equal}
\end{eqnarray}
and
\begin{eqnarray}
\text{sign} (\Delta x_n^*) |\Delta x_n^*|^{p-1}=\frac{c_n}{\lambda^* p}.\label{eq:sign}
\end{eqnarray}

From~\eqref{eq:sign} and the fact that $\lambda^* p$ is positive, we know $\text{sign}(\Delta x_n^*)=\text{sign}(c_n)$, and hence we have
\begin{eqnarray}
|\Delta x_n^*|^{p-1}=\frac{|c_n|}{\lambda^* p},\nonumber
\end{eqnarray}
which can be simplified further to
\begin{eqnarray}
\Delta x_n^*=\left(\frac{|c_n|}{\lambda^* p}\right)^{1/(p-1)} \text{sign} (c_n).\nonumber
\end{eqnarray}

Combining these with~\eqref{eq:equal}, we obtain the value of $\lambda^*$:
\begin{eqnarray}
\lambda^*=\frac{1}{p}\left(\frac{\sum\limits_{n=1}^N |c_n|^{p/(p-1)}}{N\eta^p}\right)^{(p-1)/p}.\nonumber
\end{eqnarray}

As the result, we have
\begin{eqnarray}
\Delta x_n^*=\frac{|c_n|^{1/(p-1)}(N)^{1/p}}{(\sum|c_n|^{p/(p-1)})^{1/p}}\text{sign}(c_n)\eta.\nonumber
\end{eqnarray}

Hence, 
\begin{eqnarray}
\text{AIF}(\psi,\mathbf{x},p)&=&\sum c_n\frac{|c_n|^{1/(p-1)}(N)^{1/p}}{(\sum|c_n|^{p/(p-1)})^{1/p}}\text{sign}(c_n)\nonumber\\
&=&\frac{\sum\limits_{n=1}^N|c_n|^{p/(p-1)}}{\left(\frac{1}{N}\sum|c_n|^{p/(p-1)}\right)^{1/p}}.\nonumber
\end{eqnarray}

Using~\eqref{eq:partialxn}, we can further simplify the expression to
\begin{eqnarray}
\text{AIF}(\psi,\mathbf{x},p)=\frac{\left(\frac{1}{N}\sum\limits_{n=1}^N\left|\frac{\partial }{\partial x}[\psi]_{x=x_n,\theta=T_N}\right|^{\frac{p}{p-1}}\right)^{\frac{p-1}{p}}}{\left|\frac{1}{N}\sum\limits_{n=1}^N \frac{\partial}{\partial \theta}[\psi]_{x=x_n,\theta=T_N}\right|}.\label{eq:AIFpapp}
\end{eqnarray}

\textcolor{black}{For $p=\infty$, as $N^{1/p}\overset{p\rightarrow \infty}\rightarrow 1 $, \eqref{eq:linear} can be written as \begin{eqnarray}\label{eq:lineari}
	\min\limits_{\Delta\mathbf{x}}&& -\sum\limits_{n=1}^{N}\Delta x_n c_n,\nonumber\\
	\text{s.t.}&& ||\Delta \mathbf{x}||_{\infty}\leq \eta.
	\end{eqnarray}
	It is easy to see that the optimal $\Delta x_{n}^*=\eta \text{sign}\{c_n\} $. Hence,
	\begin{eqnarray}
\text{AIF}(\psi,\mathbf{x},p)=\sum\limits_{n=1}^N |c_n|=\frac{\sum\limits_{n=1}^N\left|\frac{\partial }{\partial x}[\psi]_{x=x_n, \theta=T_N}\right|}{\left|\sum\limits_{n=1}^{N}\frac{\partial }{\partial \theta}[\psi]_{x=x_n,\theta=T_N}\right|},
	\end{eqnarray}
	which is the limit of~\eqref{eq:AIFpapp} as $p\rightarrow \infty$. 
}
\section{Proof of Theorem~\ref{thm:tradeoff}}\label{app:tradeoff}
As $\psi^{'}(x)\geq 0$, we have $\mathbb{E}_{F_{0}}[\psi^{'}(X)]>0$, and $\sup\limits_{x}|\psi(x)|$ is either $\psi(\infty)$ or $-\psi(-\infty)$. Hence for $p=2$, the optimization problem~\eqref{eq:opt} is equivalent to
\begin{eqnarray}
\min&&
\frac{\mathbb{E}_{F_0}[\psi^{'}(X)^2]}{(\mathbb{E}_{F_0}[\psi^{'}(X)])^2}\nonumber\\
\text{s.t.}&& \frac{\psi(-\infty)+\int_{-\infty}^{\infty}\psi^{'}(x)\text{d}x}{\mathbb{E}_{F_{0}}[\psi^{'}(X)]}
\leq \xi,\nonumber\\
&&\frac{-\psi(-\infty)}{\mathbb{E}_{F_{0}}[\psi^{'}(X)]}
\leq \xi,\nonumber\\
&&	\psi(-\infty)+\mathbb{E}_{F_{0}}\left[\int_{-\infty}^X\psi^{'}(t)\text{d}t\right]=0,\nonumber\\
&& \psi^{'}\geq 0.\nonumber
\end{eqnarray}
As the objective function does not involve $\psi(-\infty)$, we can first solve
\begin{eqnarray}
\min&&
\frac{\mathbb{E}_{F_{0}}[\psi^{'}(X)^2]}{(\mathbb{E}_{F_{0}}[\psi^{'}(X)])^2},\nonumber\\
\text{s.t.}&& \frac{-\mathbb{E}_{F_{0}}\left[\int_{-\infty}^X\psi^{'}(t)\text{d}t\right]+\int_{-\infty}^{\infty}\psi^{'}(x)\text{d}x}{\mathbb{E}_{F_{0}}[\psi^{'}(X)]}
\leq \xi,\nonumber\\
&& \frac{\mathbb{E}_{F_{0}}\left[\int_{-\infty}^X\psi^{'}(t)\text{d}t\right]}{\mathbb{E}_{F_{0}}[\psi^{'}(X)]}
\leq \xi,\nonumber\\
&& \psi^{'}(x)\geq 0.\nonumber
\end{eqnarray}
After obtaining the solution, we can simply set $\psi(-\infty)=-\mathbb{E}_{F_{0}}\left[\int_{-\infty}^X\psi^{'}(t)\text{d}t\right]$ to make $\psi$ Fisher consistent.

To simplify the notation, in the remainder of the proof, we will use $g(x)$ to denote $\psi^{'}(x)$. We now further simplify the optimization problem. First, we have
\begin{eqnarray}
\mathbb{E}_{F_{0}}\left[\int_{-\infty}^Xg(t)\text{d}t\right]&=&\int_{-\infty}^{\infty}f_0(x)\left[\int_{-\infty}^xg(t)\text{d}t\right]\text{d}x\nonumber\\
&=&\int_{-\infty}^{\infty}g(t)\left[\int_{t}^{\infty}f_0(x)\text{d}x\right]\text{d}t\nonumber\\
&=&\int_{-\infty}^{\infty}g(t)\left[1-F_0(t)\right]\text{d}t.\label{eq:int}
\end{eqnarray}
Coupled with the fact that $g(x)\geq 0$ and $f_0(x)\geq 0$, the optimization above is equivalent to	
\begin{eqnarray}
\min&&
\frac{\int_{-\infty}^{\infty}g^2(x)f_0(x)\text{d}x}{\left(\int_{-\infty}^{\infty}g(x)f_0(x)\text{d}x\right)^2},\nonumber\\
\text{s.t.}&& \int_{-\infty}^{\infty}g(x)F_0(x)\text{d}x
\leq \xi \int_{-\infty}^{\infty}g(x)f_0(x)\text{d}x,\nonumber\\
&& \int_{-\infty}^{\infty}g(x)\left[1-F_0(x)\right]\text{d}x
\leq \xi \int_{-\infty}^{\infty}g(x)f_0(x)\text{d}x,\nonumber\\
&& g(x)\geq 0.\nonumber
\end{eqnarray}

It is clear that the optimization problem is scale invariant in the sense that if $g^*(x)$ is a solution to this problem, then for any positive constant $c$,  $cg^*(x)$ is also a solution to this problem. As a result, without loss of generality, we can assume $\int_{-\infty}^{\infty}g(x)f_0(x)\text{d}x=1$. Using this, we can further simplify the optimization problem to
\begin{eqnarray}
\min&&
\frac{1}{2}	\int_{-\infty}^{\infty}g^2(x)f_0(x)\text{d}x,\nonumber\\
\text{s.t.}	&&\int_{-\infty}^{\infty}g(x)f_0(x)\text{d}x=1,\nonumber\\
&& \int_{-\infty}^{\infty}g(x)F_0(x)\text{d}x
\leq \xi ,\nonumber\\
&& \int_{-\infty}^{\infty}g(x)\left[1-F_0(x)\right]\text{d}x
\leq \xi,\nonumber\\
&& g(x)\geq 0.\nonumber
\end{eqnarray}

To solve this convex functional minimization problem, we first form the Lagrangian function
\begin{eqnarray}
\mathcal{L}&=&\frac{1}{2}\int_{-\infty}^{\infty}g^2(x)f_0(x)\text{d}x+\nu\left(-\int_{-\infty}^{\infty}g(x)f_0(x)\text{d}x+1\right)\nonumber\\
&&-\lambda(x)g(x)+\vartheta_1\left(\int_{-\infty}^{\infty}g(x)F_0(x)\text{d}x- \xi \right)
\nonumber\\
&&+\vartheta_2\left(\int_{-\infty}^{\infty}g(x)\left[1-F_0(x)\right]\text{d}x- \xi\right).\nonumber
\end{eqnarray}

Let $H=\frac{1}{2}g^2(x)f_0(x)-\nu g(x)f_0(x)+\vartheta_1g(x)F_0(x)+\vartheta_2g(x)(1-F_0(x))-\lambda(x)g(x)$. As no derivative $g^{'}(x)$ is involved in $H$, the optimality condition Euler-Lagrange equation~\cite{Kot:Book:14}
\begin{eqnarray}
\frac{\partial H}{\partial g}-\frac{d}{d x}\left(\frac{\partial H}{\partial g^{'}}\right)=0\nonumber
\end{eqnarray}
simplifies to
\begin{eqnarray}
g^*(x)f_0(x)-\nu^* f_0(x)+\vartheta_2^*+(\vartheta_1^*-\vartheta_2^*)F_0(x)-\lambda^*(x)=0,\label{eq:gf}
\end{eqnarray}
in which the parameters $\vartheta_1^*\geq 0$, $\vartheta^*_2\geq 0$, $\lambda^*(x)\geq 0$ satisfy~\cite{Burger:Book:03}
\begin{eqnarray}
&&\int_{-\infty}^{\infty}g^*(x)f_0(x)\text{d}x=1,\nonumber\\
&&\vartheta^*_1\left( \int_{-\infty}^{\infty}g^*(x)F_0(x)\text{d}x
- \xi \right )=0,\nonumber\\
&& \vartheta^*_2\left(\int_{-\infty}^{\infty}g^*(x)\left[1-F_0(x)\right]\text{d}x- \xi\right)=0,\nonumber\\
&& \lambda^*(x)g(x)\geq 0.\label{eq:posi}
\end{eqnarray}

From~\eqref{eq:gf}, for $x$ in the range of $f_0(x)$, we have
\begin{eqnarray}
g^*(x)=\frac{\lambda^*(x)+\nu^* f_0(x)-\vartheta^*_2-(\vartheta^*_1-\vartheta^*_2)F_0(x)}{f_0(x)}.\nonumber
\end{eqnarray}

Combining this with the condition~\eqref{eq:posi}, we know that if $\nu^* f_0(x)-\vartheta^*_2-(\vartheta^*_1-\vartheta^*_2)F_0(x)>0$, then $\lambda^*(x)=0$. On the other hand, if $\nu^* f_0(x)-\vartheta^*_2-(\vartheta^*_1-\vartheta^*_2)F_0(x)<0$, then $g^*(x)=0$. As a result, we have

\begin{eqnarray}
&&\hspace{-8mm}g^*(x)=\nonumber\\
&&\hspace{-10mm}\left\{\begin{array}{cc}
\nu^*-\frac{ \vartheta_2^*+(\vartheta_1^*-\vartheta_2^*)F_0(x)}{ f_0(x)}, & \nu^* f_0(x)> \vartheta_2^*+(\vartheta_1^*-\vartheta_2^*)F_0(x);\\
0,& \text{otherwise},
\end{array}\right.\nonumber
\end{eqnarray}
which completes the proof.
\section{Proof of Theorem~\ref{thm:scale}}\label{app:scale}
First of all, minimizing~\eqref{eq:AIFscale} is same as solving
\begin{eqnarray}
\hspace{-3mm}\min &&\hspace{-6mm} \frac{\mathbb{E}_{F_1}\left[\psi^{'}(X)^{2}\right]}{\left(\mathbb{E}_{F_1}\left[X\psi^{'}(X)\right]\right)^2},\label{eq:cost}	\\
\hspace{-3mm}\text{s.t.}&&\hspace{-6mm} \mathbb{E}_{F_1}[\psi(X)]=\psi(-\infty)+\mathbb{E}_{F_{1}}\left[\int_{-\infty}^X\psi^{'}(t)\text{d}t\right]=0,\label{eq:constraint}
\end{eqnarray}
in which the condition $\mathbb{E}_{F_1}[\psi(X)]=0$ ensures that the estimator is Fisher consistent.

As $\psi(-\infty)$ does not appear in the objective function, we can solve~\eqref{eq:cost} without the constraint~\eqref{eq:constraint} first. After that, we can simply set $$\psi(-\infty)=-\mathbb{E}_{F_{1}}\left[\int_{-\infty}^X\psi^{'}(t)\text{d}t\right]$$ so that the constraint~\eqref{eq:constraint} will be satisfied. Furthermore, similar to the proof of Theorem~\ref{thm:tradeoff}, to simplify the notation, we will use $g(x)$ to denote $\psi^{'}(x)$. It is clear from~\eqref{eq:cost} that the cost function is scale-invariant. Hence, without loss of generality, we can assume $\left(\mathbb{E}_{F_1}\left[Xg(X)\right]\right)^2=1$, for which we can further focus on $\mathbb{E}_{F_1}\left[Xg(X)\right]=1$. Combining all these together, the optimization problem can be converted to 
\begin{eqnarray}
\min &&\frac{1}{2}\int_{-\infty}^{\infty}g^2(x)f_1(x)\text{d}x,\nonumber\\
\text{s.t.}&& \int_{-\infty}^{\infty}xg(x)f_1(x)\text{d}x=1.\nonumber
\end{eqnarray}
For this convex calculus of variations problem, we form Lagrange function
\begin{eqnarray}
\mathcal{L}=\int_{-\infty}^{\infty}\frac{1}{2}g^2(x)f_1(x)\text{d}x+\nu\left(-\int_{-\infty}^{\infty}xg(x)f_1(x)\text{d}x-1\right).\nonumber
\end{eqnarray}
The corresponding Euler-Lagrange equation can be simplified to
\begin{eqnarray}
g^*(x)f_1(x)-\nu^* xf_1(x)=0,\label{eq:euler}
\end{eqnarray}
and the optimal value of $\nu^*$ is selected to satisfy the condition
\begin{eqnarray}
\int_{-\infty}^{\infty}xg^*(x)f_1(x)\text{d}x=1.\label{eq:con}
\end{eqnarray}
From~\eqref{eq:euler}, we know that in the range of $X$ where $f_1(x)>0$, $g^*(x)=\nu^* x$. Plugging this into~\eqref{eq:con}, we obtain 
\begin{eqnarray}
\nu^*=\frac{1}{\int_{-\infty}^{\infty}x^2f_1(x)\text{d}x}.\nonumber
\end{eqnarray}
As the result, for $x$ in the range of $f_1(x)$, the optimal $g^*(x)$ is 
\begin{eqnarray}
g^*(x)=\frac{x}{\mathbb{E}_{F_1}[X^2]},\nonumber
\end{eqnarray}
and 
$\psi(-\infty)=-\mathbb{E}_{F_{1}}\left[\int_{-\infty}^Xg(t)\text{d}t\right]$.
\section{Proof of Theorem~\ref{thm:scaletrade}}\label{app:scaletrade}
Following the same strategy as those in the proof of Theorem~\ref{thm:tradeoff}, we can first solve the following problem 
\begin{eqnarray}
\min && \frac{\mathbb{E}_{F_1}\left[\psi^{'}(X)^{2}\right]}{\left(\mathbb{E}_{F_1}\left[X\psi^{'}(X)\right]\right)^2},\nonumber\\
\text{s.t.}&&	-\mathbb{E}_{F_1}\left[\int_{-\infty}^{X}\psi^{'}(t)dt\right]+\int_{-\infty}^{\infty}\psi^{'}(t)\text{d}t\nonumber\\&&\hspace{8mm}\leq \xi\left|\mathbb{E}_{F_{1}}[X\psi^{'}(X)]\right|,\nonumber\\
&&	\mathbb{E}_{F_1}\left[\int_{-\infty}^{X}\psi^{'}(t)dt\right]\leq \xi\left|\mathbb{E}_{F_{1}}[X\psi^{'}(X)]\right|,\nonumber\\
&& \psi^{'}(x)\geq 0,\nonumber
\end{eqnarray}
and then set $\psi(-\infty)=-\mathbb{E}_{F_1}\left[\int_{-\infty}^{X}\psi^{'}(t)dt\right]$ to satisfy the Fisher consistent constraint~\eqref{eq:fisher}.

Now, we consider two different cases depending on whether $\mathbb{E}_{F_{1}}[X\psi^{'}(X)]$ is positive or negative. In the following, to simplify notation, we will use $g(x)$ to denote $\psi^{'}(x)$.

We will solve the case with $\mathbb{E}_{F_{1}}[Xg(X)]>0$ in detail. The case $\mathbb{E}_{F_{1}}[Xg(X)]<0$ can be solved in the similar manner. With $\mathbb{E}_{F_{1}}[Xg(X)]>0$, the optimization problem is same as

\begin{eqnarray}
\min && \frac{\mathbb{E}_{F_1}\left[g^{2}(X)\right]}{\left(\mathbb{E}_{F_1}\left[Xg(X)\right]\right)^2},\nonumber\\
\text{s.t.}&&	-\mathbb{E}_{F_1}\left[\int_{-\infty}^{X}g(t)dt\right]+\int_{-\infty}^{\infty}g(t)\text{d}t\leq \xi\mathbb{E}_{F_{1}}[Xg(X)],\nonumber\\
&&	\mathbb{E}_{F_1}\left[\int_{-\infty}^{X}g(t)dt\right]\leq \xi\mathbb{E}_{F_{1}}[Xg(X)],\nonumber\\
&& g(x)\geq 0.\nonumber
\end{eqnarray}

Similar to the optimization problems in Theorem~\ref{thm:tradeoff} and~\ref{thm:scale}, the optimization problem is scale-invariant, and hence without loss of generality, we can focus on $\mathbb{E}_{F_{1}}[Xg(X)]=1$. Furthermore, similar to~\eqref{eq:int}, we have $
\mathbb{E}_{F_{1}}\left[\int_{-\infty}^Xg(t)\text{d}t\right]=\int_{-\infty}^{\infty}g(t)\left[1-F_1(t)\right]\text{d}t.
$ The problem is then converted to
\begin{eqnarray}
\min && \int_{-\infty}^{\infty}g^{2}(x)f_1(x)\text{d}x,\nonumber\\
\text{s.t.}&& \int_{-\infty}^{\infty}xg(x)f_1(x)\text{d}x=1,\nonumber\\
&&	\int_{-\infty}^{\infty}g(x)F_1(x)\text{d}x\leq \xi,\nonumber\\
&&	\int_{-\infty}^{\infty}g(x)[1-F_1(x)]\text{d}x\leq \xi,\nonumber\\
&& g(x)\geq 0.\nonumber
\end{eqnarray}

To solve this convex functional minimization problem, we first form the Lagrangian function
\begin{eqnarray}
\mathcal{L}&=&\hspace{-3mm}\frac{1}{2}\int_{-\infty}^{\infty}g^2(x)f_1(x)\text{d}x+\nu\left(-\int_{-\infty}^{\infty}xg(x)f_1(x)\text{d}x+1\right)\nonumber\\
&&\hspace{-3mm}-\lambda(x)g(x)+\vartheta_1\left(\int_{-\infty}^{\infty}g(x)F_1(x)\text{d}x
- \xi \right)\nonumber\\
&&\hspace{-3mm}+\vartheta_2\left(\int_{-\infty}^{\infty}g(x)\left[1-F_1(x)\right]\text{d}x- \xi\right).\nonumber
\end{eqnarray}

Let $F=\frac{1}{2}g^2(x)f_1(x)-\nu x g(x)f_1(x)+\vartheta_1g(x)F_1(x)+\vartheta_2g(x)(1-F_1(x))-\lambda(x)g(x)$. As no derivative $g^{'}(x)$ is involved in $F$, the Euler-Lagrange equation
\begin{eqnarray}
\frac{\partial F}{\partial g}-\frac{d}{d x}\left(\frac{\partial F}{\partial g^{'}}\right)=0\nonumber
\end{eqnarray}
simplifies to
\begin{eqnarray}
&&g^*(x)f_1(x)-\nu^* x f_1(x)+\vartheta^*_2+(\vartheta^*_1-\vartheta^*_2)F_1(x)-\lambda^*(x)\nonumber\\&&\hspace{15mm}=0,\label{eq:gfs}
\end{eqnarray}
in which the parameters $\vartheta^*_1\geq 0$, $\vartheta^*_2\geq 0$, $\lambda^*(x)\geq 0$ satisfy
\begin{eqnarray}
&&\int_{-\infty}^{\infty}xg^*(x)f_1(x)\text{d}x=1,\nonumber\\
&&\vartheta^*_1\left( \int_{-\infty}^{\infty}g^*(x)F_1(x)\text{d}x
- \xi \right )=0,\nonumber\\
&& \vartheta^*_2\left(\int_{-\infty}^{\infty}g^*(x)\left[1-F_1(x)\right]\text{d}x- \xi\right)=0,\nonumber\\
&& \lambda^*(x)g^*(x)\geq 0.\label{eq:posis}
\end{eqnarray}

From~\eqref{eq:gfs}, for those $x$ with $f_1(x)>0$, we have
\begin{eqnarray}
g^*(x)=\frac{\lambda^*(x)+\nu^* x f_1(x)-\vartheta^*_2-(\vartheta^*_1-\vartheta^*_2)F_1(x)}{f_1(x)}.\nonumber
\end{eqnarray}

Combining this with the condition~\eqref{eq:posis}, we know that if $\nu^* x f_1(x)-\vartheta^*_2-(\vartheta^*_1-\vartheta^*_2)F_1(x)>0$, then $\lambda^*(x)=0$. On the other hand, if $\nu^* f_1(x)-\vartheta^*_2-(\vartheta^*_1-\vartheta^*_2)F_1(x)<0$, then $g^*(x)=0$. As the result, we have

\begin{eqnarray}
&&\hspace{-8mm}g^*(x)=\nonumber\\
&&\hspace{-8mm}\left\{\begin{array}{cc}
\nu^* x-\frac{ \vartheta_2^*+(\vartheta_1^*-\vartheta_2^*)F_1(x)}{ f_1(x)}, & \hspace{-2mm}\nu^* x f_1(x)> \vartheta_2^*+(\vartheta_1^*-\vartheta_2^*)F_1(x);\nonumber\\
0,& \text{otherwise}.
\end{array}\right.
\end{eqnarray}

\bibliographystyle{ieeetr}
\bibliography{macros,MachineLearning}

\begin{IEEEbiographynophoto}{Lifeng Lai} (SM'19) received the B.E. and M.E. degrees from Zhejiang University, Hangzhou, China in 2001 and 2004 respectively, and the Ph.D. from The Ohio State University at Columbus, OH, in 2007. He was a postdoctoral research associate at Princeton University from 2007 to 2009, an assistant professor at University of Arkansas, Little Rock from 2009 to 2012, and an assistant professor at Worcester Polytechnic Institute from 2012 to 2016. Since 2016, he has been an associate professor at University of California, Davis. Dr. Lai's research interests include information theory, stochastic signal processing and their applications in wireless communications, security and other related areas.
	
	Dr. Lai was a Distinguished University Fellow of the Ohio State University from 2004 to 2007. He is a co-recipient of the Best Paper Award from IEEE Global Communications Conference (Globecom) in 2008, the Best Paper Award from IEEE Conference on Communications (ICC) in 2011 and the Best Paper Award from IEEE Smart Grid Communications (SmartGridComm) in 2012. He received the National Science Foundation CAREER Award in 2011, and Northrop Young Researcher Award in 2012. He served as a Guest Editor for IEEE Journal on Selected Areas in Communications, Special Issue on Signal Processing Techniques for Wireless Physical Layer Security from 2012 to 2013, and served as an Editor for IEEE Transactions on Wireless Communications from 2013 to 2018. He is currently serving as an Associate Editor for IEEE Transactions on Information Forensics and Security.
\end{IEEEbiographynophoto}

\begin{IEEEbiographynophoto}{Erhan Bayraktar}, the holder of the Susan Smith Chair, is a full professor of Mathematics at the University of Michigan, where he has been since 2004 upon getting his Ph.D. from Princeton University. Professor Bayraktar's research is in stochastic analysis, control, applied probability, and mathematical finance. In particular, recently his research focused on mean field games, machine learning and model uncertainty. He has over 130 publications in top journals in these areas.
	
	Professor Bayraktar is recognized as a leader in his areas of research: He is a corresponding editor in the SIAM Journal on Control and Optimization and also serves in the editorial boards of Applied Mathematics and Optimization, Mathematics of Operations Research, Mathematical Finance. His research has been also been continually funded by the National Science Foundation. In particular, he received a CAREER grant. He has been a plenary speaker in numerous conferences and workshops. Professor Bayraktar has had 23 post-docs and 13 Ph.D. students, 9 of whom graduated. They hold prestigious positions in academia and industry.
	
	Professor Bayraktar has been the director of the Risk Management and Quantitative Finance Masters program at the University of Michigan since its inception in 2015.  
\end{IEEEbiographynophoto}

\end{document}